\documentclass[11pt]{amsart}
\usepackage{amsthm} 
\usepackage{color}
\usepackage{graphicx, epsfig}
\usepackage{epstopdf}
\usepackage{amsmath, amssymb, latexsym, euscript, amscd}
\usepackage{url}
\usepackage[all]{xy}
\usepackage{psfrag}
\usepackage{mathrsfs}
\usepackage{caption, wrapfig, multirow, tabularx, mathrsfs,verbatim}
\usepackage{subcaption}
\usepackage{mathptmx}
\usepackage[]{overpic}
\usepackage[dvipsnames]{xcolor}
\usepackage{thmtools}
\usepackage[colorlinks=true, linkcolor=blue, citecolor = blue, urlcolor=blue]{hyperref}
\usepackage[]{cleveref}
\usepackage{tikz-cd} 
\usepackage[margin=1.50in]{geometry}

\usepackage{xcolor}

\definecolor{brightPurple}{HTML}{444FAD}
\definecolor{brightTeal}{HTML}{19B092}
\definecolor{brightPink}{HTML}{FF0082}
\definecolor{brightIndigo}{HTML}{667BFB}
\definecolor{brightBlue}{HTML}{3498DB}
\definecolor{brightGrey}{HTML}{9DBBD8}

\newtheorem{theorem}{Theorem}[section]
\newtheorem{lemma}[theorem]{Lemma}
 
\newtheorem{definition}[theorem]{Definition} 
\newtheorem{proposition}[theorem]{Proposition}

\theoremstyle{remark}
\newtheorem{remark}[theorem]{Remark}

\def\gap{\vspace{.3cm}\noindent}

\def\smallskip{\vspace{.15cm}}
\def\medskip{\vspace{.3cm}}

\def\Ucal{\mathcal U}

\def\Wcal{\mathcal W}

\def\Vcal{\mathcal V}

\def\bdy{\partial}
\def\RR{\mathbb R}
\def\ZZ{\mathbb Z}
\def\ZZ{\mathbb Z}

\def\lab{\mathbb L}
\def\EuM{\EuScript{M}}
\def\EuT{\EuScript{T}}
\def\EuW{\EuScript{W}}
\def\EuU{\EuScript{U}}
\def\EuV{\EuScript{V}}

\def\cl{\operatorname{cl}}

\def\bdy{\partial}
\def\bd{\partial}

\def\GL{\operatorname{GL}}
\def\star{\operatorname{star}}

\def\intr{\textnormal{int}}
\def\TR{\textnormal{Tr}}
\def\PL{\textnormal{PL}}

\newcommand{\bv}{\left[\begin{array}{c}}
\newcommand{\ev}{\end{array}\right]}
\newcommand{\bbmat}{\begin{bmatrix}} 
\newcommand{\ebmat}{\end{bmatrix}}
\newcommand{\bmat}{\begin{matrix}} 
\newcommand{\emat}{\end{matrix}}
\newcommand{\bpmat}{\begin{pmatrix}} 
\newcommand{\epmat}{\end{pmatrix}}
\newcommand{\icol}[1]

\title{Combinatorial Characterizations and Branched Manifolds}

\author{Daryl Cooper}
\address{Mougins, France}
\email{cooper@math.ucsb.edu}

\author{Leslie Mavrakis}
\address{Department of Mathematics\\University of Utah\\Salt Lake City, UT 84112}
\email{l.mavrakis@utah.edu}

\author{Priyam Patel}
\address{Department of Mathematics\\University of Utah\\Salt Lake City, UT 84112}
\email{patelp@math.utah.edu}

\date{}

\begin{document}

\begin{abstract}
A family of compact $n$-manifolds is {\em locally combinatorially defined} (LCD) if it can be specified by a finite number of local triangulations. We show that LCD is equivalent to the existence of a compact branched $n$-manifold $W$, such that the family is precisely those manifolds that immerse into $W$. In subsequent papers, the equivalence will be used to show that, for each of the eight Thurston geometries, the family of closed 3-manifolds admitting that geometry is LCD.
\end{abstract}

\maketitle
\section{Introduction} 

Many interesting families of geometric objects are specified by their local structure. For instance, a {\em locally homogeneous space} is one that is locally modeled on $G/H$, where $G$ is a Lie group and $H$ is a closed subgroup. Classical examples include Riemannian manifolds of constant curvature, as well as real projective manifolds. Our interest here is in exploring a combinatorial analogue of this idea.
 
Given a finite simplicial complex $K$, consider the class of simplicial complexes in which every point has a neighborhood that is simplicially isomorphic to $K$. This condition can be strengthened by fixing a vertex $v \in K$ and requiring that the local isomorphisms send each $0$-cell to $v$. The family of $3$-valent graphs arises in this way. For another example, consider those surfaces that can be triangulated with a fixed number $n$ of $2$-dimensional simplices around every vertex. For $n=5,6,7$, this construction yields spherical, Euclidean, and hyperbolic surfaces, respectively.

The pair $(K,v)$ as above is called a {\em model} centered on $v$. A result of Cooper and Thurston \cite{MR935525} shows that there is a set of 5 models, $\EuM$, so that every closed orientable 3-manifold admits a triangulation \emph{modeled} on $\EuM$. This means that every vertex of the triangulation admits a neighborhood isomorphic to one of the 5 models.
 
The goals of this paper are threefold. First, to describe what it means for a family of $n$-manifolds to be {\em locally combinatorially defined} (LCD) using the ideas above. Roughly, a family is LCD if every manifold in the family is modeled on a finite set $\EuM$ \emph{and} no other manifolds are modeled on $\EuM$. Second, to discuss {\em branched manifolds} and the family of manifolds that immerse into them. A family of compact piecewise linear (PL) $n$-manifolds is \emph{BM} if it consists of exactly those manifolds that immerse into a particular compact PL branched $n$-manifold. The third goal is to demonstrate the equivalence of these two ideas and introduce the notion of a {\em universal branched manifold} for such families. This is the content of the main theorem. Both directions are rather surprising.

\begin{theorem} \label{main thm}
A family of compact PL $n$-manifolds is LCD if and only if it is BM.
\end{theorem}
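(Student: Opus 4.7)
The plan is to establish each direction by constructing the relevant object explicitly: a compact branched manifold out of a finite set of models, and a finite set of models out of a compact branched manifold. The common thread is that compactness (of $W$ in one direction) is equivalent to finiteness (of the set of models in the other), since both force the local combinatorial data to lie in a finite list of types realized by the stars of vertices.

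\emph{From LCD to BM.} Let $\EuM = \{(K_1, v_1), \ldots, (K_r, v_r)\}$ be the given set of models. I would first form the disjoint union $\bigsqcup_{i=1}^{r} \star(v_i; K_i)$ of basepoint stars, and then identify pairs of $(n{-}1)$-faces across this disjoint union whenever they are abstractly simplicially isomorphic, by the obvious isomorphism. The resulting quotient $W$ naturally carries the structure of a compact branched $n$-manifold: at interior points of top-dimensional simplices it is a manifold, and at points of lower-dimensional faces several top-dimensional strata meet along a common face as in the standard local model. Now given any PL manifold $M$ with a triangulation modeled on $\EuM$, each vertex $v \in M$ admits a simplicial isomorphism $\star(v; M) \cong (K_i, v_i)$ for some $i$. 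I would define a map $M \to W$ star-by-star using these isomorphisms, and verify that the identifications made in the construction of $W$ are exactly those required for the star maps attached to any two adjacent vertices of $M$ to agree on the shared $(n{-}1)$-face. This gives an immersion $M \to W$, so the LCD family is contained in the BM family of $W$. Conversely, any compact PL manifold immersing in $W$ pulls back the tautological vertex structure on $W$ to a triangulation whose vertex stars are isomorphic to the $(K_i, v_i)$, giving the reverse containment.

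\emph{From BM to LCD.} Let $W$ be a compact branched $n$-manifold. I would first choose a PL triangulation of $W$ for which the branching locus is a subcomplex. At each vertex $w \in W$, set $(K_w, w) := (\star(w; W), w)$, a pointed finite simplicial complex. Because $W$ is compact, only finitely many isomorphism classes of such pointed stars occur; take $\EuM$ to be this finite list. Now given a compact PL manifold $M$ with an immersion $f\colon M \to W$, after suitable subdivision I may assume $f$ is simplicial. For each vertex $v \in M$ with $f(v) = w$, the restriction $f|_{\star(v;M)}$ is a simplicial isomorphism onto $\star(w; W) = K_w$, because an immersion of a manifold into a branched manifold is locally injective on top-dimensional simplices and $M$ itself has no branching, so no sheets can collapse. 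Hence $M$ is modeled on $\EuM$. Conversely, the star-by-star construction of the first direction shows that any manifold modeled on $\EuM$ immerses into $W$, so the two families coincide.

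\emph{The main obstacle.} The hardest step is reconciling the two notions so that the families coincide \emph{exactly}, rather than merely up to subdivision or isomorphism. This requires pinning down precisely what counts as a branched $n$-manifold (especially the allowed local structure at lower-codimension strata), what counts as an immersion into one, and how these notions interact with a chosen triangulation. The subtle points I expect to consume the most effort are: verifying that the quotient $W$ built in the first direction is genuinely a compact branched manifold rather than just a CW complex with no well-defined tangent structure; handling vertices that live on the branching locus as opposed to in the interior of a top stratum; and confirming that every immersion can be made simplicial after subdivision without enlarging the family of manifolds that immerse. Getting the foundational definitions right at the outset so that both constructions produce data fitting those definitions is where the real work lies.
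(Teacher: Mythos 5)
Both directions of your proposal contain genuine gaps, and in both cases the missing ingredient is the same: raw vertex stars carry too little information to pin down the immersion, and the paper introduces nontrivial machinery (labels, colorings, geographies, and the LLCD $\Rightarrow$ LCD reduction of \Cref{labels are LCD}) precisely to close that gap.

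For the BM $\Rightarrow$ LCD direction, taking the unlabeled isomorphism classes of vertex stars $\star(w;\Wcal)$ as $\EuM$ is not enough: a manifold can be modeled on those stars without admitting any immersion into $\Wcal$. Concretely, let $\Wcal=T^2$ with a triangulation in which every vertex has degree $6$, viewed as a (trivially) branched $2$-manifold. Then $\EuM$ consists of the single hexagonal star, and a degree-$6$ triangulated Klein bottle is modeled on $\EuM$, yet it admits no proper immersion (i.e.\ covering) to $T^2$. The paper avoids this by labeling each vertex $v$ of the pulled-back triangulation with $\zeta_M(v)=\Omega(v)\in\Wcal^{(0)}$, which forces the local data to glue into a globally well-defined simplicial map $\Psi:N\to\Wcal$; the labels are then stripped using \Cref{labels are LCD} (encoding labels by asymmetric standard subdivisions). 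Your sentence ``the star-by-star construction of the first direction shows that any manifold modeled on $\EuM$ immerses into $W$'' is exactly the unjustified step.

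For the LCD $\Rightarrow$ BM direction, the proposed quotient is not well-defined, and under any reasonable reading it produces the wrong object. All $(n-1)$-simplices are abstractly simplicially isomorphic to one another, so ``identify pairs of $(n-1)$-faces whenever they are abstractly isomorphic'' collapses all such faces; even if one restricts to boundary faces and matches some adjacent combinatorial data, two overlapping vertex stars in a modeled manifold $M$ share the whole star of an edge, not a single $(n-1)$-simplex, so a star-by-star map $M\to W$ has no reason to be consistent on overlaps. The paper's construction instead first refines $\EuM$ to a $d$-geography labeling $\EuM_G$ (\Cref{refinement}): a $d$-coloring guarantees sheets embed into the color simplex $\Delta_C$, and the geography label records the color-isomorphism type of the full $d$-neighborhood $N(M,v,d)$. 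The branched manifold is then $\Wcal=\Theta_G(Q)\subset\Delta_G$, with local projections induced by the color projection $\Psi:\Delta_G\to\Delta_C$. This large-scale rigidity (a $d$-neighborhood, for $d$ larger than the diameter of any model, not merely a star) is what makes both containments $\PL(\Wcal,\Pi)\supseteq\PL(\EuM_G)$ and $\PL(\Wcal,\Pi)\subseteq\PL(\EuM_G)$ provable. You correctly flag ``getting the foundational definitions right'' as the main obstacle, but the concrete constructions you outline do not overcome it.
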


In future papers, \Cref{main thm} will be used to show that each of the eight Thurston geometries produce families of manifolds that are LCD. In particular, this means that there is a universal branched manifold for each of these geometries. \Cref{branched manifolds} provides some general results about PL branched manifolds that will be essential in these subsequent papers. Note that the definition of a PL branched manifold (see \Cref{def:PLbranched}) has been generalized from an earlier version of this paper.

{\Cref{example} gives an example of a universal branched manifold for those 3-manifolds that are torus bundles over the circle. \Cref{main thm} implies that this family of manifolds is LCD. The reader might prefer to start there. 

The main theorem is related to the theory of laminated spaces \cite{CLARK2014669},\cite{10.1063/1.1613041}.
These appear in many contexts, including dynamical systems and tiling spaces.
Given a finite set of models, the disjoint union of all triangulated manifolds
based on these models can be made into a compact metric space that has
a codimension-0 foliation with transverse set a Cantor set: a {\em matchbox manifold}.
The distance between two vertices is $2^{-n}$ if they have maximal isomorphic
 simplical neighborhoods of size $n$.
This metric space is an inverse limit of branched manifolds. We thank an anonymous 
reviewer for pointing out this connection.

 This paper, as well as the subsequent ones, is based on the results of the PhD thesis of the second named author \cite{lesliethesis}, supervised by the third author, with assistance from the first author. 
\gap

\noindent\textbf{Acknowledgments:} The authors thank Marc Lackenby, Mladen Bestvina, and Federico Ardila for helpful discussions. The third author was partially supported by National Science Foundation CAREER Grant DMS–2046889.

\section{Combinatorial Characterizations} \label{background}
General references for background on piecewise linear (PL) topology are \cite{MR248844},\cite{MR665919}. For those who use ZFC, in some places below, the word {\em set} should be replaced by {\em proper class}. We begin this section with some basics on PL and simplicial topology. 

 A \emph{ polyhedron} is the underlying space of a locally finite simplicial complex. If $P$ and $Q$ are polyhedra, then a map $f:P\rightarrow Q$ is PL  if there is a subdivision of the triangulation of $P$ such that the restriction of $f$ to each simplex in $P$ is an affine map with image contained in some simplex of the triangulation of $Q$.

A \emph{triangulated manifold} is a simplicial complex whose
underlying space is a manifold. A \emph{PL  structure on a manifold} is a maximal atlas such that the transition maps are PL. In particular, the definition of a PL manifold does not include a decomposition into simplices. Throughout, we assume that all manifolds are connected. 

If $K$ is a simplicial complex, \emph{ the $n$-skeleton of $K$} is the subcomplex $K^{(n)}$ that consists of all simplices of dimension at most $n$. In particular, $K^{(0)}$ is the set of vertices of $K$. The \emph{degree} of a vertex in a simplicial complex is the number of edges incident to it.

Suppose that $X$ is a connected simplicial complex. The \emph{simplicial metric} $d_{X}$ on $X^{(0)}$ is defined as follows. Given $u,v\in X^{(0)}$, then $d_X(u,v)$ is the smallest integer $k$ such that there is a sequence of vertices $$u=x_0,\ x_1,x_2,\cdots,x_{k-1},\ x_k=v\in X^{(0)},$$ where $x_i$ and $x_{i+1}$ are the endpoints of a $1$-simplex for each $i$.

The \emph{$r$-neighborhood of a vertex $v\in X$} is the subcomplex $N(X,v,r)$ consisting of the union of all simplices $\sigma$ in $X$ such that every vertex of $\sigma$ is simplicial distance at most $r$ from $v$. The \emph{star} of a vertex $v\in X^{(0)}$ is $\star(v):=\star(v, X)=N(X,v,1)$. 

A \emph{local model of dimension $n$} is a pair $(K,v)$ where $K$ is a simplicial complex, $v\in K^{(0)}$, and $|K|$ is PL homeomorphic to $[0,1]^n$. The vertex $v$ is called the \emph{center} of the model. Let $\EuM$ be a finite set of local models of dimension $n$. A triangulated $n$-manifold $M$ is \emph{modeled on $\EuM$} if and only if for every vertex $x\in M^{(0)}$ there exists a simplicial neighborhood $U\subset M$ of $x$, such that $(U,x)$ is simplicially isomorphic to some model $(K,v)\in \EuM$.

If $\EuM$ is a finite set of local models, then $\TR(\EuM)$ is the set of triangulated manifolds modeled on $\EuM$. The set of PL manifolds that are PL homeomorphic to an element of $\TR(\EuM)$ is denoted by $\PL(\EuM)$.

A set $\EuT$ of PL $n$-manifolds is \emph{locally combinatorially defined}, or \emph{LCD},  if it is $\PL(\EuM)$ for some finite set of local models $\EuM$. 

\section{Labeled Models}

In this section, we discuss adding \emph{labels} to simplices. The concept of a \emph{labeled local model} is then introduced using labeled simplices, where the set of labels is finite. The main result of this section (\Cref{labels are LCD}) shows that labels can be encoded into the combinatorics of triangulations, i.e. that \emph{labeled LCD} is equivalent to LCD. 

\begin{definition}
Suppose that $X$ is an $n$-dimensional simplicial complex and define $\lab(X)$ to be the set of vertices and $n$-simplices in $X$. A {\em labeling of $X$} is a function $\phi:\lab(X)\rightarrow L$. 

The set $L$ is called {\em the set of labels}.  If $v$ is a vertex of $X$, then $\phi(v)$ is called a {\em vertex label}. 
If $\sigma$ is an $n$-simplex of $X$ then $\phi(\sigma)$ is called a {\em simplex label}.

A {\em labeled simplicial complex} is a pair $(K,\phi)$ where $K$ is a simplicial complex and $\phi$ is a labeling of $K$. The complex $K$ is called the {\em underlying complex} of $(K,\phi)$. 
 \end{definition}
 
If $(K,\phi)$ and $(K',\phi')$ are labeled simplicial complexes, then a simplicial map $f:K\rightarrow K'$ is {\em label-preserving} if $\forall x\in \lab(K)$ then $\phi'(f(x))=\phi(x)$.

\begin{definition} 
A {\em labeled local model} is a triple $(K,v,\phi)$ where $(K,\phi)$ is a labeled simplicial complex and $(K,v)$ is a local model.
If $\EuM$ is a finite set of labeled local models, then a triangulated manifold $M$ is {\em modeled on $\EuM$} if there is labeling $\phi':\lab(M)\rightarrow L$ such that every vertex of $(M,\phi')$ has a neighborhood that is label-preserving simplicially isomorphic to a model in $\EuM$.
\end{definition}

If $\EuM$ is a finite set of labeled local models, then $\TR(\EuM)$ is the set of labeled triangulated manifolds modeled on $\EuM$. The set of PL manifolds that are PL homeomorphic to an element of $\TR(\EuM)$ is denoted by $\PL(\EuM)$. A set $\EuT$ of PL manifolds is {\em labeled LCD} or {\em LLCD} if $\EuT=\PL(\EuM)$ for some finite  set $\EuM$ of labeled local  models.

Labels can be encoded combinatorially using repeated subdivisions as follows.
 
 \begin{definition}
Suppose $x$ is a point in the interior of an $n$-simplex $\sigma$, where $n \geq 1$. Replacing $\sigma$ by the cone $x*\bdy\sigma$ is called {\em stellar subdivision}. It subdivides $\sigma$ into $(n+1)$ simplices $x*\tau$, one for each face $\tau$ of dimension $(n-1)$  of $\sigma$.
\end{definition}

\begin{lemma}(Standard Subdivision)\label{lem:subdivision}
 Suppose that $n\ge 2$ and $\sigma$ is an $n$-simplex
with vertices $v_0,\cdots,v_n$. Fix an integer $N>0$. There is a subdivision $K$ of $\sigma$ (see   \Cref{stellar subdivision 2}) called a {\em  standard subdivision} with all the new vertices in the interior of $\sigma$, so that $\bdy\sigma$ is not subdivided. Moreover, the vertices $v_i$ in $K$ have distinct degrees larger than $N$ and the degree of every vertex in the interior of $K$ is less than $2n+3$.
\end{lemma}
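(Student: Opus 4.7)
The plan is to first perform one ``central'' stellar subdivision of $\sigma$, and then for each boundary vertex $v_i$ to insert a long nested chain of stellar subdivisions (a ``finger'') that drives $\deg(v_i)$ as high as desired while using every new interior vertex in only a bounded number of subsequent subdivisions.

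I will begin by stellar subdividing $\sigma$ at an interior vertex $x_0$, yielding $n+1$ small simplices $S_j = x_0 \ast \tau_j$, where $\tau_j$ is the face of $\sigma$ opposite $v_j$. Next, for each $j \in \{0,\dots,n\}$, stellar subdivide $S_j$ at a new interior vertex $w_j$. After these $n+2$ subdivisions $\deg(x_0) = 2n+2 < 2n+3$ and each $w_j$ has degree $n+1$. Fix a bijection $i \mapsto j(i)$ with $j(i) \neq i$ (for instance $j(i) = (i+1)\bmod(n+1)$), so that the finger for $v_i$ can occupy the pocket $S_{j(i)}$; different pockets have pairwise disjoint interiors.

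Inside the pocket for $v_i$, start from the $n$-simplex $A_1 = w_{j(i)} \ast \tau_{j(i)}$, which does not contain $x_0$. Perform $n-1$ ``drilling'' subdivisions: at step $m$, stellar subdivide $A_m$ at a new interior vertex $z_m$ and pass to the sub-simplex $A_{m+1}$ opposite some remaining $v_k$ with $k \neq i, j(i)$; with a cyclic drop order, after step $n-1$ the simplex $A_n$ meets $\partial\sigma$ only at $v_i$. Then subdivide $A_n$ at $z_n$ and pass to $A_{n+1}$, the sub-simplex opposite $w_{j(i)}$, whose vertices are $z_n, z_{n-1}, \ldots, z_1, v_i$ (so $w_{j(i)}$ is gone). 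Finally, perform a ``cascade'' of $K_i$ further subdivisions: at step $m \geq n+1$, subdivide $A_m$ at $z_m$ and pass to $A_{m+1} = z_m \ast (A_m \setminus \{z_{m-n}\})$, dropping the oldest interior vertex each time. Because $v_i \in A_m$ at every step, each of these $n + K_i$ finger subdivisions adds exactly one new neighbor to $v_i$.

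The degree bounds then follow by direct count. No new vertex lies on $\partial\sigma$, so $\partial\sigma$ is not subdivided. For interior vertices: $x_0$ is incremented only by the $n+1$ outer subdivisions, giving $\deg(x_0) = 2n+2$; $w_{j(i)}$ is a vertex of only the $n$ simplices $A_1, \dots, A_n$, so $\deg(w_{j(i)}) = 2n+1$; each $z_m$ is a vertex of at most $n$ subsequent simplices $A_{m+1}, \ldots, A_{m+n}$ before being dropped, so $\deg(z_m) \leq 2n+1$. Fingers in distinct pockets introduce disjoint new interior vertices and therefore do not interact. Finally, $\deg(v_i)$ equals a fixed constant (which can be made the same for all $i$ by using a cyclic drop order, in which case the contributions of other fingers' drilling sum to $n(n-1)/2$) plus $K_i$, so choosing the $K_i$ to be distinct integers large enough makes all $\deg(v_i)$ distinct and strictly greater than $N$. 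The main obstacle is the strict interior-degree bound $<2n+3$, which forces every interior vertex to appear in at most $n$ subdivisions after its creation; this is exactly what the drilling-plus-``drop-the-oldest'' cascade design guarantees.
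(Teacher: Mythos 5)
Your construction is correct, but it takes a genuinely different and more elaborate route than the paper's. The paper's key tool is the ``$k$-chain subdivision based on $\tau_i$'': after the initial subdivision at $x_0$, one repeatedly stellar-subdivides the simplex $x_* \ast \tau_i$ at a fresh interior point, always retaining the \emph{entire} face $\tau_i$. Because $\tau_i$ is kept, every boundary vertex $v_j$ with $j\neq i$ gains exactly one neighbor per step, so the $i$-th chain inflates all of $\deg(v_0),\dots,\widehat{\deg(v_i)},\dots,\deg(v_n)$ uniformly by the chain length $N+i$; taking these chain lengths distinct forces the $\deg(v_j)$ to be distinct. Meanwhile, a chain vertex is a vertex of only two consecutive $n$-simplices of the chain, so it gains exactly one extra neighbor before being abandoned, giving interior degree $n+1$ or $n+2$, while $\deg(x_0)=2(n+1)$; the cascade mechanism you invented is not needed. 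Your approach instead inflates one $v_i$ at a time, which forces you to ``drill'' to shed the unwanted $v_k$'s and then ``cascade'' to stop interior degrees from growing; this works (I checked that your cyclic drop order does give the symmetric contribution $n(n-1)/2$, and even without that symmetry one could simply space the $K_i$ widely since the drilling contributions are uniformly bounded), but it is strictly more bookkeeping for the same payoff. The one thing your write-up leaves slightly underspecified is the verification that the cyclic drop order really makes the drilling contribution to $\deg(v_i)$ independent of $i$; that claim is true but deserves a line of justification, or alternatively can be sidestepped by the widely-spaced-$K_i$ argument.
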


\begin{proof}
Suppose that $\tau$ is a face of dimension $(n-1)$ of an $n$-simplex $\sigma$. Consider a sequence of stellar subdivisions that produces a sequence of $n$-simplices $$\sigma=\sigma_0\supset\sigma_1\supset\cdots\supset\sigma_k,$$  where $\sigma_{i+1}=x_{i+1}*\tau$ is an $n$-simplex obtained from $\sigma_i$ by stellar subdivision using a point $x_{i+1}$ in the interior of $\sigma_i$. We call the resulting subdivision of $\sigma$ a {\em $k$-chain subdivision of $\sigma$,
based on $\tau$}.

Let $v_0,v_1,\cdots,v_n$ be the vertices of a labeled $n$-simplex $\sigma$. Suppose that $\tau_i$ is the $(n-1)$-dimensional face of $\sigma$ that contains all these vertices except $v_i$. Perform one stellar subdivision of $\sigma$ at $x_0$. Then, for each $0\le i\le n$, perform a $(N+i)$-chain subdivision based on $\tau_i$.
The degree of $v_j$ after this is
$$e(v_j):=n+1+\sum_{i\ne j} (N+i).$$

Therefore, the degrees of $v_j$ are all distinct and larger than $N$. Note that the degree of $x_0$  is $2(n+1)$ and the degrees of all other vertices added during the $k$-chain subdivisions are $n+1$ or $n+2$.
\end{proof}

\begin{figure}[h!]
\centering
    \begin{overpic}[scale = 0.35]{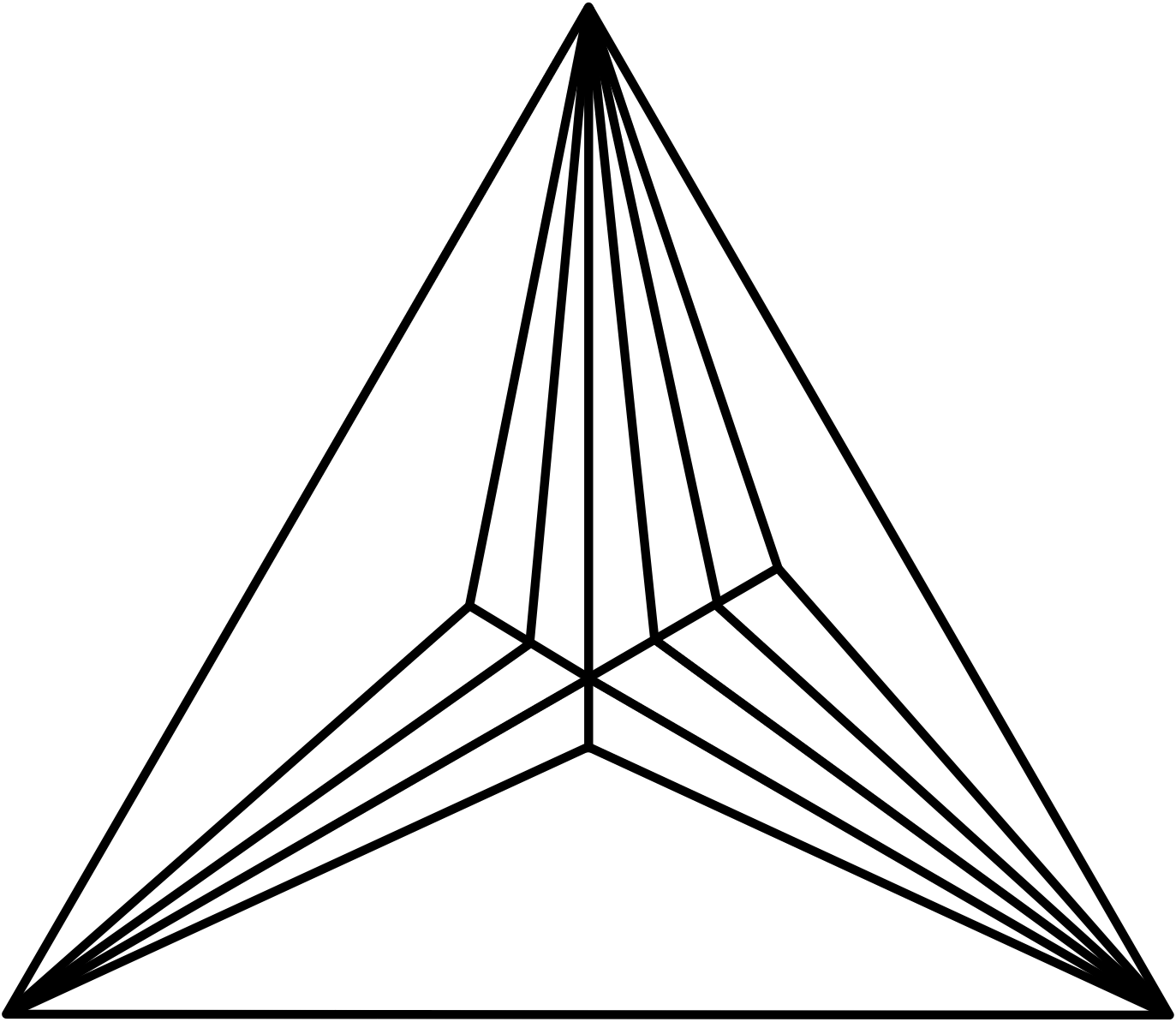}
    \end{overpic}
    \vspace{0.3 cm}
    \caption{A standard subdivision of a 2-simplex $\sigma$, where the degrees of the vertices in $\partial \sigma$ after subdivision are 6, 7, and 8.}
    \label{stellar subdivision 2}
\end{figure}

\begin{lemma}\label{labels are LCD}
    If a family of PL manifolds is LLCD, then it is LCD. 
\end{lemma}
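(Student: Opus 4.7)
The plan is to combinatorially encode each label from a finite set $L = \{\ell_1, \ldots, \ell_k\}$ via subdivisions, converting a finite set $\EuM$ of labeled local models into a finite set $\EuM'$ of unlabeled local models with $\PL(\EuM') = \PL(\EuM)$. Pick integers $N_1 < \cdots < N_k$, each larger than $2n+3$. For each labeled $n$-simplex $\sigma$ of a triangulated manifold $T$, I would apply the procedure of \Cref{lem:subdivision} but with the chain-subdivision parameter $N+i$ replaced by $F(\phi(v_i))$, where $F\colon L\to\ZZ$ is the injection $F(\ell_j) = N_j$. Because $\bdy\sigma$ is not subdivided, the per-simplex subdivisions glue consistently across any labeled triangulation, yielding a canonical unlabeled triangulation $T^*$ of the same PL manifold. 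Define $\EuM'$ as the (finite) collection of unlabeled local models arising as the $2$-neighborhoods of the old and new vertices in the subdivisions of models in $\EuM$.

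Forward direction $\PL(\EuM) \subseteq \PL(\EuM')$: given $M \in \PL(\EuM)$ with labeled $T \in \TR(\EuM)$, the subdivision $T^*$ is a triangulation of $M$ modeled on $\EuM'$ by construction. Reverse direction $\PL(\EuM') \subseteq \PL(\EuM)$: given $T' \in \TR(\EuM')$ on the PL manifold $M$, I would distinguish ``old'' vertices (degree larger than $2n+3$) from ``new'' interior vertices (degree at most $2n+3$) using the bounds in \Cref{lem:subdivision}. For each old vertex $v$, its $2$-neighborhood in $T'$ matches a unique subdivided local model $(K^*, v) \in \EuM'$, itself the subdivision of a unique labeled $(K, v, \phi) \in \EuM$; the chain visible in $(K^*, v)$ based on the face of $\sigma$ opposite $v$ (whose vertices sit at distance $2$ from $v$) has length $F(\phi(v))$, from which $\phi(v)$ is recovered by applying $F^{-1}$. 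The coarse triangulation $T$ is then recovered by identifying each original $n$-simplex with the subdivided region of $T'$ bounded by original (old-to-old) edges.

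The main obstacle is to verify that the encoding is truly reversible, i.e., that non-isomorphic labeled models produce non-isomorphic unlabeled $2$-neighborhoods in the subdivision. A label-preserving isomorphism of labeled models clearly induces a simplicial isomorphism of their subdivisions. For the converse, a simplicial isomorphism between subdivisions preserves the old/new distinction (by degree), restricts to an isomorphism of the coarse complexes, and preserves the chain-length patterns within each subdivided simplex (visible in the $2$-neighborhoods), forcing the isomorphism to be label-preserving since $F$ is injective. Using $2$-neighborhoods rather than stars is essential here: the chain encoding $\phi(v)$ lives opposite $v$ in $\sigma$, so its vertices sit at distance $2$ from $v$. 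Repeated labels on different vertices of a single simplex pose no problem, since the subdivision scheme depends only on the labels and the simplex's vertex set, not on any chosen ordering.
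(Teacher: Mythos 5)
Your overall strategy — encode labels into the combinatorics of a standard subdivision (Lemma~\ref{lem:subdivision}), distinguish ``old'' from ``new'' vertices by a degree threshold, and read the labels back off — is the same as the paper's. But your realization of it differs in two places, and one of them is a genuine gap.

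The serious issue is your choice of $\EuM'$. The paper's replacement models $\EuM^*$ are the \emph{full} subdivided complexes $X^*$, with one model $(X^*,v)$ for each original center $v$ and additional models $(X^*,v')$ centered at the new vertices. You instead declare the models of $\EuM'$ to be the $2$-neighborhoods of vertices in the $X^*$'s. Local models in $\EuM$ can have arbitrary (finite) simplicial diameter, and nothing in the definition of ``modeled on'' restricts attention to $2$-neighborhoods. So in the reverse direction: from a triangulation $T'$ modeled on $\EuM'$ you can distinguish old vertices, collapse each subdivided simplex to an $n$-simplex, and (via the chain lengths) assign a label to each old vertex — but you only control $2$-neighborhoods of $T'$, which after collapsing only pin down small neighborhoods in the recovered $T$. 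You never show that each vertex of $T$ has a neighborhood isomorphic to an \emph{entire} model $(K,v,\phi)\in\EuM$, which is exactly what ``$T\in\TR(\EuM)$'' requires when the models have diameter greater than $2$. The paper closes this by noting that the vertex $u\in M$ has a neighborhood isomorphic to all of $(X^*,v)$, and $(X^*,v)$ is the subdivision of a full labeled model $(X,v,\phi)\in\EuM$. The forward direction has the same difficulty: the $2$-neighborhood $N(T^*,v,2)$ may overrun the model neighborhood $U\cong X$ when $X$ has small diameter, so it is not clear it matches any $(N(X^*,w,2),w)$. The fix is to use the full $X^*$ as the new models, exactly as the paper does.

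Two smaller points. First, you pass quickly over the fact that $T'$ decomposes \emph{uniquely} into subdivided simplices; the paper proves this as a separate claim using the degree threshold and a separation argument, and that step is what makes ``the subdivided region bounded by old-to-old edges'' well defined. Second, your per-vertex encoding (chain length opposite $v$ equals $F(\phi(v))$) is a legitimate alternative to the paper's scheme, which instead assigns to each labeled-simplex isomorphism class a globally distinct standard subdivision with pairwise-distinct boundary-vertex degrees. Your encoding allows repeated degrees and even nontrivial automorphisms of $K(\sigma)$ when a simplex has repeated vertex labels; as you observe, this does not by itself break label recovery, but it does forfeit the asymmetry the paper uses to make the degree-to-label assignment immediate. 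Either encoding can be made to work, but yours needs the extra argument identifying ``the chain opposite $v$'' inside $T'$ unambiguously, which you sketch but do not establish.
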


\begin{proof} 
The idea of the proof is to use \Cref{lem:subdivision} to replace each $n$-simplex $\sigma$ in each labeled model by a standard subdivision $K(\sigma)$, where the triangulation on $K(\sigma)$ depends on the labels.

The new triangulation must have the property  that 
the original triangulation can be read off after subdivision. The standard subdivisions given by \Cref{lem:subdivision} add many new vertices in the interiors of the $n$-simplices that all  have {\em bounded degree}. We can choose the degrees in $K(\sigma)$ of the  vertices of $\sigma$ to be larger than this bound, which allows the original triangulation to be recovered.

Suppose that an LLCD family is given by  a finite set of labeled local models $\EuM$.  Let $Z$ be a  set consisting of one labeled $n$-simplex from each label-preserving isomorphism class of labeled $n$-simplices appearing in $\EuM$.

By \Cref{lem:subdivision}, there is a set $Y$ that consists of $|Z|$ distinct standard subdivisions of an $n$-simplex with the following properties. Each complex $K(\sigma)\in Y$ is a triangulation of an $n$-simplex $\sigma$ such that $\bdy K(\sigma)=\bdy\sigma$. Every vertex $v\in\bdy K(\sigma)$ has degree larger than $\Lambda=2(n+1)$, and every pair of distinct vertices in $\bdy K(\sigma)\cup \bdy K(\sigma')$ has different degrees for any $n$-simplices $\sigma$ and $\sigma'$. In particular, the standard subdivisions $K(\sigma)$ are {\em asymmetric} which means that there are no non-trivial simplicial automorphisms of the triangulation. 

Now choose a bijection $h:Z\rightarrow Y$ and for each $n$-simplex $\sigma \in Z$ choose a linear homeomorphism $H_\sigma: \sigma \rightarrow K(\sigma)$. This determines a degree label $e(v) > \Lambda$ for each vertex $v$ in $Z$. Let $\phi(v)$ be the label given by $\EuM$ to $v$.
Then, there is a well-defined map that sends $e(v)\mapsto\phi(v)$. In general, this map is not injective.
 
Define a set of unlabeled models $\EuM^*$ by taking the models in $\EuM$ and replacing each $n$-simplex label isomorphic to $\sigma$ by the standard subdivision $K(\sigma)$ using $H_\sigma$. 
For each model $(X,v,\phi) \in \EuM$, let $X^*$ be the resulting subdivision. Then $\EuM^*$  contains the model $(X^*, v)$, but it also contains models $(X^*, v')$ where $v'$ ranges over the set of  vertices in $X^*$ that are not in $X$. This ensures that the new vertices in $X^*$ are the center of some model in $\EuM^*$.

The original triangulated models and their vertex labels can be recovered from the unlabeled complexes using the map $e(v)\mapsto\phi(v)$.

We claim that $\PL(\EuM) = \PL(\EuM^*)$. The subdivision process immediately gives us that $\PL(\EuM) \subseteq \PL(\EuM^*)$.
  
For the other containment, suppose a manifold $M$ with triangulation $T^*$ is modeled on $\EuM^*$. Since the models in $\EuM^*$ are unions of standard subdivisions of $n$-simplices, $M$ is covered by standard subdivisions. The following claim shows that the decomposition of $M$ into standard subdivisions is unique. Suppose $A$ and $B$ are subcomplexes of $T^*$, each isomorphic to a standard subdivision. 
\gap 

\noindent \textbf{Claim:} If $A\neq B$ and $A\cap B\neq\emptyset$, then $A\cap B=\bdy A\cap\bdy B$, which is a single simplex in $M$. 
\gap

\noindent \textbf{Proof of claim:} The underlying spaces of $A$ and $B$ are $n$-balls in an $n$-manifold. In particular, $\bdy A$ separates the interior of $A$ from the interior of the complement. The vertices in $\bdy A$ and $\bdy B$ are the only vertices in $A\cup B$ with degree larger than $\Lambda$. Hence, no vertex of $\bdy A$ is in the interior of $B$ and no vertex of $\bdy B$ is in the interior of $A$. Thus, if $A\ne B$, using the fact $\bdy A$ and $\bdy B$ both separate, then the vertices in $A\cap B$ are all in $\bdy A\cap\bdy B$. 

Let $v$ be a vertex in $\partial A\cap \partial B$. There is a neighborhood $U$ of $v$ in $M$ that is simplicially isomorphic to a model in $\EuM^*$. There is a subset $A'$ of $U$ that is a standard subdivision of an $n$-simplex such that $A\cap A'$ contains points in the interior of $A$. The paragraph above implies that $A=A'$. Similarly, there is a
standard subdivision $B'\subset U$ and $B=B'$. Since $U$ is isomorphic to a model in $\EuM^*$, we have $A'\cap B' = A \cap B$ is one simplex.  \hfill $\blacksquare$\\

The claim implies that there is a unique decomposition of $M$ into standard subdivisions. Using the bijection between standard subdivisions in $Y$ and labeled simplices in $Z$, replace each standard subdivision $A$ in $M$ by one $n$-simplex $\sigma_A$. The result is a triangulated manifold $N$ that is PL homeomorphic to $M$. 

The simplex label on $\sigma_A$ is uniquely determined by the bijection. Moreover, for every vertex $u$ in $\sigma_A$, the vertex degree $e_A(u)$ of $u$ in $A$ determines its label. However, each vertex $u$ in $N$ is contained in several such $\sigma_A$. To show that the labeling on $N$ is well-defined, note that there is a neighborhood of $u\in M$ that is isomorphic to some $(X^*,v)\in\EuM^*$. It follows that $e_A(u)$ always encodes the vertex label of $v$ in $X$, independent of $A$. Thus, the labeling of $N$ is well-defined. In addition, $(X^*, v)$ is obtained by subdivision from a labeled local model $(X,v, \phi)\in \EuM$. Therefore, that $N$ is modeled on $\EuM$ and $M \in \PL(\EuM)$.
\end{proof}

\section{Colors and Geographies}\label{sec:colors}
Two kinds of vertex labels are important in this paper. The first is an assignment of a {\em color} in such a way that nearby vertices always have different colors.
The second is the assignment of a {\em geography}, which encodes the simplicial structure and color labels of vertices in a larger neighborhood of the vertex than is given by a local model. Since we only use the vertex labels, simplex labels can be ignored in the remainder of the paper.

Below, color and geography labels are added to a set of local models to produce a new set of local models, which has underlying simplicial complexes that are copies of the original models. Despite the added structure, the main result of this section (\Cref{refinement}) says that both sets of models define the same set of PL manifolds.

\begin{definition}
Suppose that $X$ is a connected simplicial complex and let $\phi:\lab(X)\rightarrow C$ be a labeling. The labeling is called a \emph{$d$-coloring} if, for all $v \in X^{(0)}$, the map $\phi$ is injective on the vertices of $N(X, v,d)$. The codomain $C$ is called the \emph{set of colors}.
\end{definition}

\begin{proposition}\label{finitecolors}  Given $d>0$ and a finite set of local models  $\EuM$  there is a finite set $C$ of colors such that every triangulated manifold modeled on $\EuM$ has a $d$-coloring using the colors $C$.
\end{proposition}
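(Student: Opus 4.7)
The plan is to turn the coloring problem into a bounded-degree graph coloring problem and apply a greedy argument. The idea is that because $\EuM$ is finite, every triangulated manifold $M$ modeled on $\EuM$ has uniformly bounded \emph{local complexity}: both the vertex degrees and the sizes of combinatorial balls are controlled by $\EuM$ alone, independently of $M$.

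First I would establish a uniform bound on the size of $d$-neighborhoods. Let
\[
\Delta \;=\; \max_{(K,v)\in \EuM}\; \deg_K(v).
\]
Since every vertex $x\in M^{(0)}$ has a simplicial neighborhood isomorphic to the star of some model center, every vertex in $M$ has degree at most $\Delta$. An induction on $r$ then yields a bound of the form
\[
|N(M,x,r)^{(0)}| \;\le\; 1 + \Delta + \Delta(\Delta-1) + \cdots + \Delta(\Delta-1)^{r-1} \;=:\; B(r,\EuM),
\]
a quantity depending only on $\EuM$ and $r$, not on $M$ or $x$.

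Next I would construct the auxiliary ``conflict graph'' $G(M)$ whose vertex set is $M^{(0)}$ and whose edges connect distinct vertices $u,w$ that lie together in some $N(M,v,d)$; equivalently, vertices within simplicial distance $2d$. A proper vertex coloring of $G(M)$ is exactly a $d$-coloring in the sense of the paper, because $\phi$ is injective on each $N(M,v,d)$ iff no edge of $G(M)$ is monochromatic. By the triangle inequality and the bound in the previous paragraph, each vertex $v$ of $G(M)$ has degree at most $B(2d,\EuM)-1$.

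Finally I would apply greedy coloring: any graph of maximum degree $D$ admits a proper coloring with $D+1$ colors (for finite $M$ this is immediate by induction on the number of vertices; in general, locally finite bounded-degree graphs are $(D+1)$-colorable by a standard compactness/De Bruijn–Erd\H{o}s argument). Setting $C = \{1,2,\ldots,B(2d,\EuM)\}$, which depends only on $d$ and $\EuM$, gives the desired uniform finite palette.

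The only mildly delicate step is the degree bound for $G(M)$; everything else is routine. The main point to emphasize is that the bound $B(r,\EuM)$ comes purely from the finiteness of $\EuM$ together with the fact that ``modeled on $\EuM$'' controls the star of every vertex, so the same palette works simultaneously for every manifold in the family.
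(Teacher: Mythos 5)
Your argument is correct and matches the paper's approach, which is stated as a two-sentence sketch: a uniform bound on $|N(M,u,d)^{(0)}|$ coming from finiteness of $\EuM$, followed by ``basic results about chromatic number.'' You have simply expanded the sketch, supplying the degree bound $\Delta$, the growth estimate $B(r,\EuM)$, the identification of a $d$-coloring with a proper coloring of the distance-$2d$ conflict graph, and the greedy $(D+1)$-coloring bound; the only small simplification available is that the manifolds here are compact, so the triangulations are finite and no De Bruijn--Erd\H{o}s step is needed.
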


\begin{proof} 
Since $\EuM$ is finite, there is a universal bound on the number of vertices in $N(M, u, d)$ for any vertex $u$ in any manifold $M$ modeled on $\EuM$. The result follows by basic results about chromatic number in graph theory.
\end{proof}  
 
\begin{definition}
Suppose that $X$ is a simplicial complex.
A labeling $$\phi=(\phi_C,\phi_G):X^{(0)}\rightarrow C \times G$$ is called a {\em $d$-geography} if $\phi_C$ is a $d$-coloring of $X$ and $\phi_G$ has the property that whenever $u,v\in X^{(0)}$, then $\phi_G(u)=\phi_G(v)$ if and only if there is a simplicial isomorphism
$$f:N(X,u,d)\rightarrow N(X,v,d)$$ that preserves the color labels given by $\phi_C$. Such a map $\phi_G$ is called a \emph{geography labeling}.
\end{definition}

There is no requirement on the topology of $N(X,v,d)$, for example it might not be simply connected. The isomorphism $f$ is unique because no two vertices in $N(X,v,d)$ have the same color, so the image of each vertex  $w\in N(X,u,d)$ is the unique vertex in $N(X,v,d)$ with the same color label as $w$.

One way to obtain a $d$-geography is to define $G$ to
consist of one representative from each color-isomorphism class of colored $d$-neighborhoods in $X$, and let $\phi_G(v)$ be the element of $G$ color-isomorphic to $(N(X,v,d), \phi_C|)$.

\begin{definition}
A set of labeled models $\EuM$ is a \emph{$d$-geography  labeling} if for all manifolds $M$ modeled on $\EuM$, the labeling of $M$ is a $d$-geography.
\end{definition}

\begin{lemma}\label{refinement}
Given a finite set of models $\EuM$, there exists a $d$-geography labeling $\EuM_G$ such that $\PL(\EuM) = \PL(\EuM_G)$. 
\end{lemma}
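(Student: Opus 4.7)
The plan is to add color and geography labels to the models in $\EuM$, producing a finite labeled set $\EuM_G$ whose $\PL$-class equals that of $\EuM$. First, \Cref{finitecolors} applied with radius $d$ yields a finite color set $C$ such that every $M \in \TR(\EuM)$ admits a $d$-coloring using $C$. Since $\EuM$ is finite, there is a uniform bound on the number of vertices in any $d$-neighborhood in a manifold modeled on $\EuM$, so the set $G$ of color-preserving isomorphism classes of colored $d$-neighborhoods that can occur is finite. For each $M \in \TR(\EuM)$, fix a $d$-coloring $\phi_C$ of $M$ and form the induced geography labeling $\phi_G$ with values in $G$, as in the remark after the definition of geography.

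Define $\EuM_G$ to be the collection of labeled local models $(\star(u, M), u, (\phi_C, \phi_G)|_{\star(u,M)})$ as $u$ ranges over the vertices of $M$ and $M$ over $\TR(\EuM)$. Since stars have bounded size and $C \times G$ is finite, $\EuM_G$ is finite. By construction, each $M \in \TR(\EuM)$ with its labeling is modeled on $\EuM_G$, so $\PL(\EuM) \subseteq \PL(\EuM_G)$; the reverse inclusion follows by forgetting labels.

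It remains to show that $\EuM_G$ is a $d$-geography labeling, i.e., every $Y \in \TR(\EuM_G)$ inherits a labeling that is a $d$-geography on $Y$. I plan to prove, by induction on $0 \leq r \leq d$, the following \emph{propagation claim}: for each $u \in Y^{(0)}$ there exist $M_u \in \TR(\EuM)$, $v_u \in M_u^{(0)}$, and a label-preserving simplicial isomorphism $f_r: N(Y,u,r) \to N(M_u, v_u, r)$. The base case $r = 1$ is exactly the modeling condition at $u$. For the inductive step, pick a vertex $w$ at distance $r$ from $u$; the modeling at $w$ provides a label-preserving star isomorphism $\star(w,Y) \cong \star(v_w, M_w)$ for some $M_w, v_w$. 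Equating the two descriptions of $\phi_G(w)$ forces $\phi_G(v_w) = \phi_G(f_r(w))$ as elements of $G$, and by construction of $G$ this yields a color-preserving isomorphism $N(M_w, v_w, d) \cong N(M_u, f_r(w), d)$. Since the $d$-coloring makes color-preserving isomorphisms of $d$-neighborhoods unique on overlaps, this gluing extends $f_r$ to $f_{r+1}$, and the geography labels on the newly covered layer are assigned correctly by label consistency in $Y$.

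At $r = d$, the propagation claim supplies a label-preserving isomorphism $N(Y,u,d) \cong N(M_u, v_u, d)$ for every $u$, from which the $d$-coloring property of $\phi_C$ on $Y$ transfers from $M_u$, and the geography property of $\phi_G$ on $Y$ follows from the chain $\phi_G(u) = \phi_G(u')$ iff $\phi_G(v_u) = \phi_G(v_{u'})$ iff $N(M_u, v_u, d) \cong N(M_{u'}, v_{u'}, d)$ color-preservingly iff $N(Y,u,d) \cong N(Y,u',d)$ color-preservingly. The main obstacle is the inductive step of the propagation claim: verifying that label consistency at a vertex at distance $r$ forces geography labels at its distance-$(r+1)$ neighbors to match those prescribed by $M_u$, which uses both the uniqueness of color-preserving isomorphisms under a $d$-coloring and the defining property of the geography labeling.
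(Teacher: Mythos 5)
Your overall strategy — define $\EuM_G$ by relabeling $d$-colored manifolds modeled on $\EuM$ and then verify that the new labeling is a $d$-geography — follows the paper's plan, but you deviate at one crucial point: the paper takes as underlying complexes for $\EuM_G$ the \emph{same neighborhoods} $(U,v)$ that appear in $\EuM$ (just relabeled), whereas you take \emph{stars} $\star(u,M)$. This seemingly innocent change breaks the sentence ``the reverse inclusion follows by forgetting labels.'' With the paper's choice, an unlabeled $N\in\TR(\EuM_G)$ automatically has, at each vertex, a neighborhood isomorphic to a model of $\EuM$, so $N\in\TR(\EuM)$ is immediate. With your choice, forgetting labels tells you only that the stars of $N$ match stars of manifolds modeled on $\EuM$, not that $N$ has a neighborhood of the size and shape of a model in $\EuM$. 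You would need to invoke your propagation claim (with $d$ at least the diameter of the largest model in $\EuM$) to conclude $N(Y,u,d)\cong_c N(M_u,v_u,d)$ and then pull back a model neighborhood of $v_u$. As written, though, your reverse inclusion is asserted before the propagation claim is even stated, and the claim is never connected back to it. For small $d$, the star-based $\EuM_G$ genuinely does not satisfy $\PL(\EuM_G)=\PL(\EuM)$, while the paper's construction does.

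Your propagation claim is aimed at exactly the right issue — the paper dispatches ``$\EuM_G$ is a $d$-geography labeling'' with ``it is easy to check,'' while this is really the nontrivial part — and the inductive scheme you propose is the right shape. Two of its steps need more care, though. First, in passing from radius $r$ to $r+1$ you cover the next layer by stars of vertices at distance exactly $r$; but $N(Y,u,r+1)$ can contain simplices all of whose vertices are at distance $r+1$ and none at distance $r$, so the union $\bigcup_{d(w,u)=r}\star(w,Y)\cup N(Y,u,r)$ may miss part of $N(Y,u,r+1)$. Second, the uniqueness of color-preserving isomorphisms that you rely on to make the gluings consistent is a priori a property of the \emph{target} manifolds $M_u, M_w$ (which are $d$-colored by construction), not yet of $Y$; the argument must be phrased so that the uniqueness on the $M$ side alone forces agreement on overlaps — otherwise you are implicitly assuming the $d$-coloring property on $Y$ that you are in the middle of proving.

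In short: switch $\EuM_G$ to be built from full model neighborhoods $(U,v,\phi|_U)$ as in the paper, so that the reverse inclusion genuinely is ``forget the labels''; then your propagation argument belongs where you put it, namely proving the $d$-geography property, and it should be tightened by replacing the layer-by-stars covering with a covering by all stars of vertices at distance at most $r$, and by being explicit that the coincidence of overlapping maps comes from the injectivity of $\phi_C$ on $d$-neighborhoods in the $M$'s.
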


\begin{proof}
Let $C$ be the set of colors given by \Cref{finitecolors}. Then every triangulated manifold modeled on $\EuM$ has a $d$-coloring using $C$. Let $X$ be the disjoint union of all $d$-colored manifolds modeled on $\EuM$ (using the colors $C$). 

Let $G$ be a set consisting of one representative from each equivalence class of colored subcomplexes $N(X, v, d)$. The equivalence relation is given by a simplicial isomorphism that preserves color labels. Notice that $G$ is finite due to the finiteness of $\EuM$ and $C$.

Let $Y$ be the set of all  pairs $(M,\phi)$, where $M$ is a triangulated manifold modeled on $\EuM$ and $\phi=(\phi_C,\phi_G)$. Here $\phi_C$ is a $d$-coloring of $M$ and the geography labeling $\phi_G$ of $M$ is defined by: $\phi_G(v)$ is the element of $G$ color-isomorphic to $(N(M,v,d), \phi_C|)$.

Next, we define the $d$-geography labeling $\EuM_G$. Given $(M,\phi)\in Y$ and $v$ a vertex of $M$, there exists a color and geography labeled simplicial neighborhood $U\subset M$ of $v$ such that $(U,v)$ is simplicially isomorphic to an element of $\EuM$. The set of labeled models $\EuM_G$ is defined by taking one representative in each equivalence class of such labeled neighborhoods $(U,v,\phi|_U)$. Here, $(U,v,\phi|_U)$ is equivalent to $(U',v',\phi'|_{U'})$ if and only if there is a simplicial isomorphism
$$f:(U,v) \rightarrow (U', v') \quad\textnormal{ and }\quad \phi=\phi'\circ f.$$
It is easy to check that $\TR(\EuM) = \TR(\EuM_G)$, which implies the lemma.  
\end{proof}

\section{Branched manifolds} \label{branched manifolds}
In what follows, the term {\em manifold} includes the case where the manifold has boundary.
Informally, a {\em branched $n$-manifold} is the result of gluing some $n$-cells together so that, at every point, there is a locally defined projection onto $[0,1]^n$ such that the restriction
to each cell is injective. Branched manifolds were introduced by Williams \cite{MR348794}. In dimension $1$, they are called {\em train tracks}, see for example \cite{ph16}. In dimension $2$, they are {\em branched surfaces}, see for example \cite{MR924769}. \Cref{local brnched man} shows the local structure of each. Our definition is inspired by, but different to, that of Williams. Not only do we work in the PL category instead of the smooth category, but also  his cocycle condition is replaced by a condition that charts are only {\em locally} compatible. Unless otherwise stated, $\Wcal$ denotes a PL branched $n$-manifold as an homage to Williams.

\begin{definition}\label{def:PLbranched} A \emph{ PL branched manifold of dimension $n$} is a pair $(\Wcal,\Pi)$ where $\Wcal$ is a polyhedron and $\Pi=\{ \pi_i:\EuW_i \rightarrow[0,1]^n\ |\ i\in I\}$ is an {\em atlas} that
is a collection of PL maps called \emph{ charts} that satisfy the  following conditions:  
\begin{enumerate}
\item[(i)] Every point in $\Wcal$ has a neighborhood that is some $\EuW_i$.
\item[(ii)] A \emph{ sheet in $\EuW_i$} is 
 $D\subset \EuW_i$ such that $\pi_i|_D:D\rightarrow[0,1]^n$  is a PL homeomorphism. The \emph{ sheet condition} is the requirement that $\EuW_i$ is the union of finitely many of its sheets.
 \item[(iii)] Two  charts $\pi_i$ and $\pi_j$ are {\em  compatible} if, for
  every point $x$ that has neighborhood in $\Wcal$ contained in $\EuW_i\cap\EuW_j$, there is a neighborhood $U\subset \EuW_i\cap\EuW_j$ of $x$ such that $$\forall\ p,q\in U\quad \pi_i(p)=\pi_i(q)\Leftrightarrow \pi_j(p)=\pi_j(q).$$
 The \emph{ compatibility condition} is the requirement all charts are compatible.
 \end{enumerate}

The map $(\pi_i|_D)^{-1}:[0,1]^n\rightarrow D$ is called a \emph{ parameterized sheet}. The \emph{ boundary} $\bdy\Wcal$ of $\Wcal$ is the union over $i\in I$ of the set of all $x\in \EuW_i$ such that $\EuW_i$ is a neighborhood of $x$ and $\pi_i(x)\in\bdy [0,1]^n$.  The \emph{manifold part} of $\Wcal$ is the subset of points which have a neighborhood that is contained in a sheet, and the complementary set is called the \emph{branch set}. 
 \end{definition}

 \begin{figure}[h!]
    \centering
    \includegraphics[scale = 0.4]{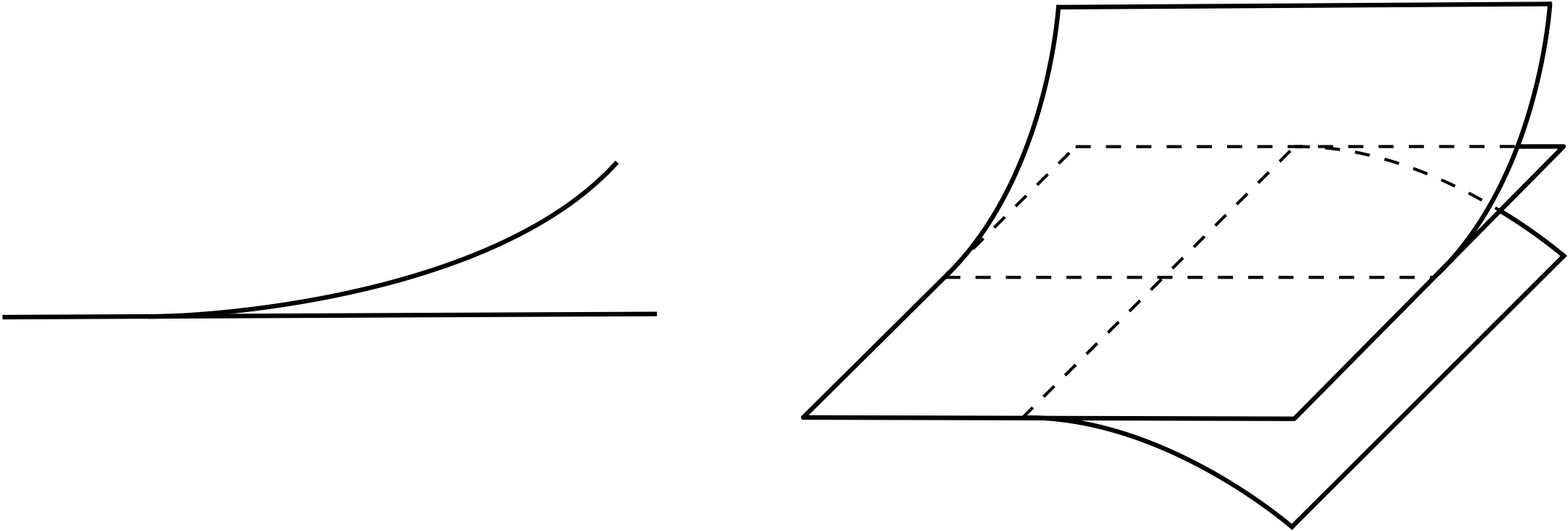}
    \caption{An example of the local structure of a branched 1-manifold and a branched 2-manifold.}
    \label{local brnched man}
    \vspace{-0.3 cm}
\end{figure}

 A \emph{chart} is a generalization of local coordinates on a manifold. A \emph{parameterized sheet} is a generalization of a parameterized neighborhood in a manifold. The branch set has codimension at least 1.  The {\em manifold part} is indeed a manifold and the compatibility requirement is automatic there. A PL branched manifold need not be connected.

The compatibility condition
has been weakened from an earlier version of this paper, since the new local condition is easier to use. In addition, a polyhedron, and hence a PL branched manifold, is no longer compact by definition. A compact branched manifold is a branched manifold where $\Wcal$ is a compact polyhedron. Every such branched manifold is isomorphic to one with a \emph{finite} atlas.

The compatibility condition ensures the existence of a generalization of tangent space for PL branched manifolds without boundary, by ensuring the equivalence relation in the following definition is independent of the choice of $\pi$.

\begin{definition}
    The \emph{tangent space} $T_x(\Wcal,\Pi)$ at a point $x\in\Wcal$ is the set of equivalence classes of PL maps $f:[0,1]\rightarrow \Wcal$ with $f(0) = x$ under the equivalence relation $f\sim g$ if there is $\epsilon>0$ such that
$\pi\circ f=\pi\circ g$ on $[0,\epsilon]$ where $\pi$ is a chart whose domain is a neighborhood of $x$. 
\end{definition}

A PL manifold is also a branched manifold, and it follows that $\pi$ identifies $T_x(\Wcal,\Pi)$ with 
$T_{y}[0,1]^n$, where $y = \pi(x)$. A PL curve has an initial segment that is linear, so this equivalence class is determined by a point in $\RR^n$ and $T_{y}[0,1]^n\cong\RR^n$.   A different choice of chart changes this identification by a PL homeomorphism. The definition above could be used to define a notion of branched immersion, however we will not use this idea. Instead, we employ  maps that locally send sheets into sheets.

\begin{definition}  Suppose that $(\Wcal,\Pi)$ and $(\Wcal',\Pi')$ are PL branched manifolds. A PL function
 $f:\Wcal\rightarrow\Wcal'$ is a {\em branched map} 
 if for every sheet $D$ in $\Wcal$ and every point $x$ in the relative interior of $D$, there is a neighborhood $U$ of $x$ in $D$ and a sheet $D'\subset \Wcal'$ such that $f(U)\subset D'$. Moreover, if $f$ is locally injective then $f$ is a {\em branched immersion}, and $f$ is a {\em branched homeomorphism} if $f$ is a homeomorphism and both $f$ and  $f^{-1}$ are branched maps. 
 \end{definition}

Note that if $M$ is a PL manifold then a PL map $\Omega:M\rightarrow\Wcal$ is an immersion exactly if for every chart $\pi\in\Pi$, the PL map $\pi\circ \Omega$ is locally injective wherever it is defined.

\begin{definition}  Suppose that $(\Wcal,\Pi)$ and $(\Wcal',\Pi')$ are PL branched manifolds. A PL function
 $f:\Wcal\rightarrow\Wcal'$ is a    \emph{proper map} if  $f^{-1}(\bdy\Wcal')=\bdy \Wcal$ and  the preimage of every compact set is compact.
 \end{definition}
 
All immersions in this paper are assumed to be proper. In order to prove that the family of PL manifolds that immerse into $(\Wcal, \Pi)$ is LCD, we first triangulate $\Wcal$. 

\begin{definition} A triangulation $T$ of a branched manifold $(\Wcal,\Pi)$ is called {\em nice}
if the branch set of $\Wcal$ is
a subcomplex of $T$ and for each simplex $\sigma$ there is a chart $\pi_i:\Wcal_i\rightarrow[0,1]^n$ such that $\Wcal_i$ is a neighborhood of $\sigma$.
\end{definition}

\noindent Since $\Wcal$ is a polyhedron, it is the underlying space of a simplicial complex and therefore admits a nice triangulation.

A proper immersion between compact manifolds of the same dimension is a covering map. Thus, a triangulation of the codomain pulls back to a triangulation of the domain. We show that the
same is true for a proper immersion of a compact manifold into a compact branched manifold of the same dimension when the branched manifold  has a {\em nice} triangulation.

\begin{lemma}\label{pullback} Suppose that $(\Wcal,\Pi)$ 
is a nicely triangulated compact PL branched $n$-manifold. Let $M$ be a compact PL $n$-manifold and $\Omega:M\rightarrow\Wcal$ be a proper PL immersion. There is a triangulation $T$ of $M$ called the {\em pullback} with the property that $\Omega:M_T\rightarrow \Wcal$  is simplicial.
\end{lemma}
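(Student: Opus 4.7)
The plan is to pull back the nice triangulation $T_\Wcal$ through $\Omega$ one simplex at a time. Define the triangulation $T$ of $M$ by declaring its $k$-simplices to be the connected components of $\Omega^{-1}(\sigma)$ as $\sigma$ ranges over the $k$-simplices of $T_\Wcal$. The task is to show that each such component is a $k$-simplex mapping PL-homeomorphically onto $\sigma$, that these pieces fit together as a simplicial complex on $M$, and that $\Omega$ is then simplicial by construction.

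The key local observation is that for each $n$-simplex $\sigma$ of $T_\Wcal$, the interior of $\sigma$ lies in the manifold part of $\Wcal$, since the branch set is a subcomplex of dimension at most $n-1$. By niceness, $\sigma$ lies in some $\Wcal_i$ whose local projection $\pi_i$ is a PL homeomorphism on each sheet. Connectedness of $\operatorname{int}\sigma$ together with the sheet structure forces $\sigma$ into a single sheet $D$ of $\Wcal_i$ (locally at any interior point, $\Wcal$ is a single-sheet manifold, so the ``sheet containing $x$'' is a locally constant function of $x\in\operatorname{int}\sigma$; sheets are closed polyhedra, so $\sigma = \overline{\operatorname{int}\sigma}\subset D$). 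Thus $\pi_i|_\sigma$ is injective. Since $\pi_i\circ\Omega$ is locally injective (by the immersion hypothesis) as a PL map between $n$-dimensional PL spaces, it is locally a PL homeomorphism; consequently $\Omega$ restricted to $\Omega^{-1}(\operatorname{int}\sigma)$ is a local PL homeomorphism to $\operatorname{int}\sigma$, and by properness a finite-sheeted PL covering. As $\operatorname{int}\sigma$ is simply connected, each component of $\Omega^{-1}(\operatorname{int}\sigma)$ maps PL-homeomorphically onto $\operatorname{int}\sigma$.

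To extend each open lift to a closed $n$-simplex in $M$, I would proceed by induction on dimension. Preimages of vertices of $T_\Wcal$ are finite sets of points, by properness and local injectivity. Assuming every $k$-simplex of $T_\Wcal$ with $k\le n-1$ has been pulled back to a collection of $k$-simplices in $M$, the boundary of an open lift above $\operatorname{int}\sigma$ maps into the already-triangulated $\bdy\sigma$; properness yields that this closure is a compact PL $n$-ball mapping PL-homeomorphically onto $\sigma$, and thus is itself a PL $n$-simplex. Lower-dimensional simplices of $T_\Wcal$ (which may lie in the branch set) are handled by the same covering argument: although several sheets can meet along such a simplex, each connected component of the preimage in $M$ sees only one of them, because $\Omega$ is locally injective and so locally factors through a single sheet.

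The remaining verifications are bookkeeping: two lifted simplices can only meet in common lifts of faces (by injectivity of $\Omega$ on each lift together with the simplicial structure of $T_\Wcal$), every point of $M$ lies in some lift, and $\Omega$ sends each simplex of $T$ PL-homeomorphically onto a simplex of $T_\Wcal$, hence is simplicial. The main technical hurdle I anticipate is the induction step that closes up an open lift of $\operatorname{int}\sigma$ across the branch set --- verifying that its boundary really does match the already-constructed $(n-1)$-simplices in $M$, rather than producing some pathological polyhedral identification that prevents the closure from being a PL $n$-ball; here the sheet-injectivity of $\pi_i$ on $\sigma$ is what rules out such pathologies.
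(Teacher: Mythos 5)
Your overall plan matches the paper's: pull back the triangulation of $\Wcal$ by showing that, for each $n$-simplex $\sigma$ of $T_\Wcal$, the closure of a component of $\Omega^{-1}(\operatorname{int}\sigma)$ maps PL-homeomorphically onto $\sigma$. Your covering-space argument on $\Omega^{-1}(\operatorname{int}\sigma)$ (local homeomorphism $+$ properness $+$ simple connectivity $\Rightarrow$ each component of the open preimage maps homeomorphically onto $\operatorname{int}\sigma$) is a clean substitute for the paper's exhaustion $\sigma_1\subset\sigma_2\subset\cdots$ of $\operatorname{int}\sigma$ by nested compact simplices, and both arrive at the same conclusion for the open lift $U$.

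The genuine gap is exactly the one you flag at the end: you never prove that the closure $V=\operatorname{cl}(U)$ maps homeomorphically onto $\sigma$. You write ``properness yields that this closure is a compact PL $n$-ball mapping PL-homeomorphically onto $\sigma$,'' but properness only gives that $V$ is compact and $\Omega(V)=\sigma$; it does not rule out that $\Omega|_V$ identifies boundary points of $U$ (which would make $V$ something other than a ball). The paper closes this hole directly, without any induction on the skeleta: it shows $\Omega|_V$ is injective by a separation argument. Assuming $\Omega(x_1)=\Omega(x_2)$ for distinct $x_1,x_2\in V$, choose disjoint neighborhoods $U_1,U_2$; since $\Omega$ is a locally injective PL map, $\Omega(U_1)\cap\Omega(U_2)\cap\operatorname{int}\sigma$ is nonempty, producing distinct $u_1\in U_1\cap U$ and $u_2\in U_2\cap U$ with the same image, contradicting the injectivity of $\Omega|_U$ already established. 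Then $\Omega|_V$ is a continuous injection from a compact space to a Hausdorff one, hence a homeomorphism onto $\sigma$. Your proposed induction on dimension is not needed and, as you yourself note, shifts rather than solves the difficulty: the same ``closing up across the branch set'' issue recurs for each $k$.

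A smaller imprecision: defining the $k$-simplices of $T$ as ``connected components of $\Omega^{-1}(\sigma)$'' for closed $\sigma$ is not quite right, since $\Omega^{-1}(\sigma)$ can be a connected union of several $n$-cells meeting along faces. What you want (and what your later paragraphs actually use) is closures of components of $\Omega^{-1}(\operatorname{int}\sigma)$.
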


\begin{proof}
Let $\sigma$ be an $n$-simplex of $\Wcal$. It suffices to prove that if $C$ is the closure
of a component of $\Omega^{-1}(\intr(\sigma))$, then $\Omega|_C:C\rightarrow\sigma$ is a homeomorphism.

 Since $\Wcal$ is the underlying space of an $n$-dimensional simplicial complex, $\intr(\sigma)$
is an open subset of $\Wcal$. There is a sequence of
subspaces $\sigma_i\subset\sigma$, each homeomorphic to $\sigma$, with $\sigma_i\subset\intr(\sigma_{i+1})$
 and $\bigcup\sigma_i=\intr(\sigma)$. Thus, for each $i$, the $n$-manifold $\intr(\sigma)$ is a neighborhood of $\sigma_i$ in $\Wcal$.
 
  Let $C_i$ be a component of $\Omega^{-1}(\sigma_i)$. Since $A=\Omega^{-1}(\intr(\sigma))$ is open, $A$ is an $n$-manifold in $M$ that is a neighborhood of $C_i$. Now, $\Wcal$ is nicely triangulated so there is a chart $\pi_j:\Wcal_j\rightarrow[0,1]^n$ such that $\Wcal_j$ is a neighborhood of $\sigma$. Since $\intr(\sigma)$ is disjoint from the branch locus, $\sigma$ is contained in a sheet, and thus $\pi_j|_\sigma$
is a homeomorphism onto its image. Since $\pi_j\circ\Omega|_A$ is a locally injective map between $n$-manifolds, it is a local homeomorphism. But $\pi_j|_\sigma$ is a homeomorphism, so
$\Omega|_A:A\rightarrow\intr(\sigma)$ is a local homeomorphism between manifolds.  It follows that each component $C_i$ of
 $\Omega^{-1}(\sigma_i)\subset M$ is a manifold with boundary.
 
 Since $\Omega$ is proper, $C_i$ is compact.
 Thus, $\Omega|_{C_i}:C_i\rightarrow\sigma_i$ is a proper immersion between compact connected manifolds of the same dimension. Therefore, $\Omega|_{C_i}$ is a covering map and $C_i$ is homeomorphic to $\sigma_i$. 
 If $U$ is a component of  $\Omega^{-1}(\intr(\sigma))$, then 
 $U=\bigcup\, C_i$ for suitable choices of components $C_i$. Hence,
 $\Omega|_U:U\rightarrow\intr(\sigma)$ is a  bijection. It is also continuous and is a local homeomorphism, so it is an open map. Thus, it is a homeomorphism.
 
 Let $V=\cl(U)$, then by continuity $\Omega(V)\subset\sigma$. Now, $V$ is compact so that $\Omega(V)$ is a compact subset of $\sigma$ that contains $\intr(\sigma)$. 
 Hence, $\Omega(V)=\sigma$. We claim that $\Omega|_V$ is injective. Suppose that
 $x_1,x_2\in V$ and $y=\Omega(x_1)=\Omega(x_2)$. Let $U_1,U_2$ be disjoint neighborhoods of $x_1$ and $x_2$. Then, there exists $z\in \Omega(U_1)\cap\Omega(U_2)\cap\intr(\sigma)$. But this means that there are distinct points $u_i\in U_i$ with $\Omega(u_1)=z=\Omega(u_2)$. Since $u_1,u_2\in U$, this contradicts that $\Omega|_U$ is injective. Thus, $\Omega|_V$ is a continuous injective map from the compact set $V$ to the Hausdorff space $\sigma$ so that $\Omega|_V$ is a homeomorphism.
\end{proof}

The remainder of this section develops some general results about branched manifolds that might be skipped on a first reading. These results will not be used in this paper, but will be used in future papers in the series. The main result here is the Gluing Theorem (\Cref{thm: covering brnch man}), which describes how to produce a branched manifold by gluing two branched $n$-manifolds together using a finite-degree covering map along boundary components. We begin by defining an equivalence relation on branched manifolds. 

 \begin{definition} The two branched manifolds $(\Wcal,\Pi)$ and $(\Wcal,\Pi')$ are
 {\em isomorphic} if $\Pi\cup\Pi'$ is a set of compatible charts.
     \end{definition}
     Given a branched manifold $(\Wcal,\Pi)$, let $\Pi'$ be the set of all charts that are compatible with $\Pi$. Then all the elements of $\Pi'$ are compatible with each other. Thus, there is a unique maximal atlas for each isomorphism type of branched manifold. This is analogous to the definition of a smooth manifold using a {\em maximal} atlas. If $\Wcal$ is compact, then $(\Wcal, \Pi)$ is isomorphic to
     a structure with a finite atlas.

If two branched manifolds are isomorphic, then the identity map is a branched homeomorphism. However, the converse is not necessarily true. For example,  let $\Wcal=\{(x,y)\in\RR^2: y(y-x^3)=0\}$  and $\pi,\pi':\Wcal\rightarrow\RR$ given by $\pi(x,y)=x$ and $\pi'(x,y)=x+y$.
 Then $\Pi=\{\pi\}$ and $\Pi'=\{\pi'\}$ are non-isomorphic branched manifold structures on $\Wcal$ even though the identity map is a branched homeomorphism.
 
 Similarly, two atlases on the same space may define branched manifolds that are not branched homeomorphic. For example, there are different atlases on the figure 8 graph that produce train tracks that are not branched homeomorphic.

   The following lemma shows that the isomorphism type of a branched manifold is local, in the sense that it
only depends on the branched structure in an arbitrarily small neighborhood of each point.

\begin{lemma}\label{equiv} Suppose that
$(\Wcal,\Pi)$ is a PL branched manifold, and $\EuU$ is an open cover of $\Wcal$. Then there is an isomorphic branched manifold $(\Wcal,\Phi)$ such that
the domain of every 
chart in $\Phi$ is contained in some element of $\EuU$.
\end{lemma}
\begin{proof} We may assume $\Pi$ is maximal. Define $\Phi\subset\Pi$ to be the subset of charts such that the domain is contained in some
element of $\EuU$. Then $\Phi$ is a compatible set of charts. Thus, it remains to show that $\Phi$ is an atlas for $\Wcal$. 

There is a chart $\pi: \EuW \to [0,1]^n$ in $\Pi$ and open set $U\in\EuU$ such that  $V=U\cap \operatorname{int}(\EuW)$ is a neighborhood of $x$.  
Thus, there is a small cube $C\subset \pi(V)\subset[0,1]^n$ that contains $\pi(x)$. Let $\EuV$ be the component of $\pi^{-1}(C)$ that contains $x$ and let
$f:C\rightarrow[0,1]^n$ be a linear homeomorphism. Define $\phi = f\circ \pi:\EuV\rightarrow[0,1]^n$. Then, $\phi$ is compatible with every element of $\Pi$ and satisfies the domain condition that implies it is in $\Phi$. Therefore, for every $x \in \Wcal$ there exists a chart in $\Phi$ whose domain is a neighborhood of $x$. Thus, $(\Wcal, \Phi)$ is a branched manifold that is isomorphic to $(\Wcal, \Pi)$ since $\Phi \subset \Pi$. 
\end{proof}

\begin{definition} Suppose $(\Wcal,\Pi)$ is a PL branched manifold,
 $\Wcal'$ is a subspace of $\Wcal$, and $(\Wcal',\Pi')$
 is a branched manifold structure on $\Wcal'$. Then $\pi \in \Pi$ and $\pi' \in \Pi'$ are {\em  compatible} if, for
  every point $x$ that has neighborhood in $\Wcal'$ contained in $\EuW\cap\EuW'$, there is a neighborhood $U\subset \EuW\cap\EuW'$ of $x$ such that $$\forall\ p,q\in U\quad \pi(p)=\pi(q)\Leftrightarrow \pi'(p)=\pi'(q).$$
  Then $(\Wcal', \Pi')$ is a \emph{branched submanifold} of $(\Wcal, \Pi)$ if $\Pi \cup \Pi'$ is a compatible set.
\end{definition}

Note that if $(\Wcal', \Pi')$ is a branched submanifold of $(\Wcal, \Pi)$ then the inclusion map $\Wcal'\hookrightarrow\Wcal$ is a branched immersion.

\begin{theorem}[Product Theorem]\label{thm:product}
    If $(\Wcal, \Pi)$ is a PL branched $n$-manifold and $(\Vcal, \Phi)$ is a PL branched $m$-manifold, let $\Psi = \{\psi=\pi\times\phi \ | \ \pi \in \Pi, \phi \in \Phi\}$.  The {\em branched product}  $(\Wcal \times \Vcal, \Psi)$ is a PL branched $(n+m)$-manifold.
    
   For any $v \in \Vcal$ and $w \in \Wcal$ there are natural inclusion maps $i_v: \Wcal \hookrightarrow \Wcal \times \Vcal$ and $i_w: \Vcal \hookrightarrow \Wcal \times \Vcal$ where $i_v(\Wcal)=\Wcal \times\{v\}$ and $i_w(\Vcal)= \Vcal \times \{w\}$. Moreover, there are natural projection maps $p_1: \Wcal \times \Vcal \to \Wcal$
and $p_2: \Wcal \times \Vcal \to \Vcal$. 
All these are  branched maps.
\end{theorem}

\begin{proof}
 The sheets of $\Wcal \times \Vcal$ are of the form $D \times E$ where $D$ is a sheet for $\Wcal$ and $E$ is a sheet for $\Vcal$. Thus, the sheet condition holds for $(\Wcal \times \Vcal, \Psi)$. This fact can also be used to easily check that the inclusion and projection maps are branched maps. The compatibility condition on $(\Wcal \times \Vcal, \Psi)$ follows from the compatibility condition on $(\Wcal, \Pi)$ and $(\Vcal, \Phi)$. 
\end{proof} 

A {\em Riemannian metric} on a branched manifold is a metric whose restriction to each sheet is Riemannian.
A \emph{Klein geometry} is a pair $(G,X)$ where $G$ is a Lie group that acts transitively on the manifold $X$ by real analytic maps. A \emph{ geometric structure} on a branched manifold  is an identification of  each sheet with a subspace of $X$ in such a way that transition maps are in $G$. The notion of developing map and holonomy then extend to this setting.

\begin{definition}\label{def: (G,X)} A \emph{$(G,X)$ structure} on a PL branched manifold $(\Wcal, \Pi)$ is a family of local homeomorphisms $\{h_{\pi}:[0,1]^n\rightarrow X\ |\ \pi\in \Pi \}$  such that every transition map $(h_{\pi'}\circ\pi')\circ(h_{\pi}\circ\pi)^{-1}$ is the restriction of an element of $G$.\
\end{definition}

We now turn to the Gluing Theorem which is an essential component in constructing universal branched manifolds for the eight Thurston geometries. 
\begin{remark}\label{rem: homeo glue}
For $i = 1,2$, let $\Wcal_i$ be a branched manifold, $A_i\subset\bdy\Wcal_i$ be a boundary
component that contains no branched set, and $p:A_1\rightarrow A_2$ be
a homeomorphism.  Then it is easy to define a branched manifold structure
on $\Wcal=(\Wcal_1\sqcup\Wcal_2)/p$ so that $\Wcal_i$ is a branched submanifold; the argument follows almost exactly the argument for gluing two manifolds homeomorphically along boundary components to obtain another manifold.
\end{remark}
\Cref{thm: covering brnch man} generalizes this construction to the case when $p$ is a covering map of finite degree, and further to the HNN case where one boundary component of a branched manifold is mapped to another by a covering map. The proof relies on the following covering space construction on collar neighborhoods.

Suppose that $M$ is a PL manifold and $\rho: \widetilde{M} \to M$ is a finite degree covering map. Let $N=M\times[-1,1]$ and $\widetilde{N}=\widetilde{M}\times[-1,1]$. Then $p:\widetilde{N} \to N$ defined by $p = (\rho, \operatorname{id})$ is a cover of finite degree. Define $\Vcal(N,p)=\Vcal=\widetilde{N}/\sim$ where
$$(x,t)\sim(x',t')\quad\Leftrightarrow\quad (t\ge0\ \ \&\ \ p(x,t)=p(x',t')).$$
   Thus, $\Vcal$ is constructed by gluing $\widetilde{M}\times[-1,0]$ to $M\times[0,1]$ by the covering map $\rho$ on $\widetilde{M} \times 0$. Let $r:\widetilde{N}\rightarrow\Vcal$ be the quotient map and let $q:\Vcal\rightarrow N$ be the map defined by $q([x,t])=p(x,t)$.
   Then $q$ is also a quotient map and $p=q\circ r$.

Given a manifold structure $(M,\Pi)$,
suppose $\pi\in\Pi$ and $U=domain(\pi)$. 
Define a chart $\pi'$ on $\Vcal$ with domain 
$\EuV=q^{-1}(U\times[-1,1])$ by $\pi' = \pi\circ(q|_{\EuV})$.
If $\widetilde U$ is a lift of $U$ to $\widetilde M$ then $r(\widetilde U\times[-1,1])$ is a sheet in $\EuV$. Thus, $\EuV$ is a union of finitely many sheets.
The well-definedness of $q$ implies that the collection of all such charts are compatible and defines a branched manifold
structure on $\Vcal=\Vcal(N,p)$.

\begin{theorem}[Gluing Theorem]\label{thm: covering brnch man}
Let $(\Ucal, \Phi)$ be a PL branched $n$-manifold, possibly not connected. Suppose that $A$ and $B$ are two different components of $\bd\Ucal$ that are disjoint from the branch set and $p: A \to B$ is a PL covering map  of finite degree. Let $\Wcal=\Ucal/\sim$ where $x\sim p(x)$ if $x\in A$ with corresponding quotient map $q:\Ucal\rightarrow\Wcal$.

 Then, there exists a PL branched $n$-manifold $(\Wcal, \Pi)$ with the following properties: there is a neighborhood  $C\subset \Wcal$ of $q(B)$ that
 is branched homeomorphic to $\Vcal(A,p)$ and the complement is branched homeomorphic to the complement of a collar neighborhood of $(A\cup B)$ in $\Ucal$.
\end{theorem}

\begin{proof}
Let $N$ be a collar neighborhood of $(A \cup B)$ in $\Ucal$. By an application of \Cref{equiv},  $\Ucal' = \overline{\Ucal - N}$ can be equipped with a branched manifold structure so that it is a branched submanifold of $\Ucal$. Now, $\Wcal$ can be viewed as the disjoint union of $\Vcal = \Vcal(A,p)$ and $\Ucal'$ glued by a homeomorphism $\bd \Ucal' \to \bd  \Vcal$. By \Cref{rem: homeo glue}, $\Wcal$ is the desired branched $n$-manifold. 
\end{proof}

\section{Equivalence Theorem and Universal Branched Manifolds}\label{sec: equivalence}

\begin{definition} $\PL(\Wcal,\Pi)$ is the family of compact PL  manifolds that properly PL immerse into $(\Wcal,\Pi)$, and $(\Wcal,\Pi)$ is called a \emph{universal branched manifold} for this family. \end{definition}

A closed manifold $M$ is a special case of a branched manifold, and in this case, $\PL(M)$ is the set of finite coverings of $M$.

The main result of the paper is proven in this section. The first half of \Cref{main thm} is: 

\begin{theorem}\label{brnch implies LCD}
Suppose $(\Wcal, \Pi)$ is a compact PL branched $n$-manifold. Then, $\PL(\Wcal, \Pi)$ is LCD. 
\end{theorem}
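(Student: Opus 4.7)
The plan is to combine the Pullback Lemma with the local injectivity of immersions to extract a finite set of labeled local models $\EuM_L$ that exactly characterizes $\PL(\Wcal, \Pi)$, then invoke \Cref{labels are LCD} to convert this into an unlabeled LCD description. Specifically, I fix a nice triangulation $T$ of $\Wcal$ and subdivide it enough that for every vertex $w$ of $T$ the star $\star(w, \Wcal)$ is contained in some $\Wcal_i$ from the atlas $\Pi$. I let $\EuM_L$ consist of all labeled triples $(K, w, \phi)$ where $w$ is a vertex of $T$, $K \subseteq \star(w, \Wcal)$ is a PL $n$-ball subcomplex containing $w$ in its interior and lying in a single sheet of some $\Wcal_i$, and $\phi \colon K^{(0)} \to \Wcal^{(0)}$ is the inclusion. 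Finiteness of $\EuM_L$ follows from finiteness of $T$ and the fact that each $\star(w, \Wcal)$ has only finitely many subcomplexes.

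For the inclusion $\PL(\Wcal, \Pi) \subseteq \PL(\EuM_L)$, given a proper PL immersion $\Omega \colon M \to \Wcal$, I apply \Cref{pullback} to obtain a triangulation of $M$ in which $\Omega$ is simplicial, then subdivide once more so that every $\star(v, M)$ lies in an open manifold ball around $v$ on which $\Omega$ is injective. This is possible because $\pi_i \circ \Omega$ is locally injective by the immersion condition; on a small enough neighborhood it is a local homeomorphism between $n$-manifolds, hence a PL homeomorphism onto its image in $[0,1]^n$. Consequently $\Omega$ itself is a PL homeomorphism from $\star(v, M)$ onto a sheet-contained PL $n$-ball subcomplex $K_v$ of $\star(\Omega(v), \Wcal)$. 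Labeling each vertex $v$ of $M$ by $\Omega(v) \in \Wcal^{(0)}$ then exhibits $M$ as modeled on $\EuM_L$.

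For the reverse inclusion, given $M$ modeled on $\EuM_L$ with labeling $\phi' \colon M^{(0)} \to \Wcal^{(0)}$, I define $\Omega(v) := \phi'(v)$ on vertices. The vertices of any simplex $\sigma$ of $M$ share a model and carry labels that are vertices of a common simplex of $\Wcal$, so $\Omega$ extends uniquely to a simplicial map $M \to \Wcal$. The model condition forces $\Omega$ to restrict on each star $\star(v, M)$ to a PL homeomorphism onto the sheet-contained subcomplex $K_v$; since $\pi_i$ is a homeomorphism on each sheet, $\pi_i \circ \Omega$ is injective on every such star, and $\Omega$ is a PL immersion. Properness can be built in by including boundary/interior vertex type in the label. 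Applying \Cref{labels are LCD} to $\PL(\Wcal, \Pi) = \PL(\EuM_L)$ yields the desired unlabeled LCD description. The main obstacle is the subdivision argument in the forward direction: verifying that sufficient subdivision of both $T$ and the pullback triangulation forces every star of $M$ to map homeomorphically onto a sheet-contained PL ball, so that the finite list $\EuM_L$ really captures every possible local picture at a pullback vertex, and tracking properness and boundary data through these subdivisions.
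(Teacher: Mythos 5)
Your overall strategy is the same as the paper's: triangulate $\Wcal$ nicely, invoke \Cref{pullback} to make each immersion simplicial, label vertices of $M$ by their images in $\Wcal^{(0)}$, extract a finite set of labeled local models from the resulting stars, verify both containments, and finish with \Cref{labels are LCD}. The one genuine difference is in how the models are listed: the paper takes equivalence classes of stars $(\star(u),u,\zeta_M|)$ that actually occur in pullback triangulations, whereas you enumerate \emph{all} PL $n$-ball subcomplexes of the stars of $\Wcal$ that sit inside a single sheet. That choice is workable but slightly obscures the argument, since you then need to worry about whether the list is exactly right, which the paper sidesteps by construction.

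The substantive gap is the ``subdivide once more'' step in the forward inclusion. After \Cref{pullback} you have a triangulation of $M$ on which $\Omega$ is simplicial with respect to the \emph{fixed} nice triangulation $T$ of $\Wcal$. If you further subdivide $M$ on its own, $\Omega$ is no longer simplicial; if you instead pull back a finer subdivision $T'$ of $\Wcal$, then $T'$ (and hence your model set $\EuM_L$) would have to depend on $M$, which is not allowed since the family must be modeled on a single finite set. In fact no extra subdivision is needed: simpliciality plus the immersion condition already force $\Omega$ to restrict to an isomorphism from $\star(v,M_T)$ onto a subcomplex of $\Wcal$ contained in a sheet --- if two $n$-simplices of $\star(v,M_T)$ had the same image $\tau$, then choosing $\pi_i$ with $\Wcal_i\supset\tau$ would give a fold in $\pi_i\circ\Omega$ at $v$, contradicting local injectivity. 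This is exactly the unstated content of the paper's remark that ``$\star(u)$ is label-preserving simplicially isomorphic to a subcomplex of $\Wcal$.'' You should replace the vague subdivision step with this observation; as written, the forward containment is not justified.

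Two smaller points. First, requiring $K$ to be an $n$-ball with $w$ in its \emph{interior} silently assumes $\bdy\Wcal=\emptyset$; for branched manifolds with boundary you need half-ball models at vertices of $\bdy\Wcal$, and this is also where properness is actually enforced (a vertex labeled by an interior vertex of $\Wcal$ cannot have a half-ball star, and vice versa), which makes your ``properness can be built in'' remark essentially correct but worth saying carefully. Second, your assertion in the reverse direction that any simplex of $M$ has vertex labels spanning a simplex of $\Wcal$ is fine, but you should note that this uses the fact that each simplex of $M$ is contained in some modeled star, and that the simplicial extension is independent of the choice of star --- this is the same consistency the paper needs when defining $\Psi$.
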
 

\begin{proof}
Let $(\Wcal, \Pi)$ be nicely triangulated. By \Cref{labels are LCD} it suffices to show that there exists a finite set of {\em labeled} models $\EuM$ such that $\PL(\Wcal, \Pi) = \PL(\EuM)$.

For every $M \in \PL(\Wcal, \Pi)$, by \Cref{pullback}, there is a pullback triangulation $T$ so that $\Omega: M_T \to \Wcal$ is simplicial. The map $\zeta_M = \Omega|_{M^{(0)}}$ gives a labeling of $M$ with labels in $\Wcal^{(0)}$. Let $$\EuScript{X} := \{ (M, \Omega) \ | \ M \textnormal{ is a PL manifold and } \Omega: M \to \Wcal \textnormal { is an immersion }\},$$

\noindent and let  
$$\EuScript{Y} := \{ (\star(u),u, \zeta_M| ) \ | \ (M, \Omega) \in \EuScript{X}, \ u \in M_T^{(0)}\}/\sim,$$ 
where $(\star(u), u, \zeta_M|) \sim (\star(v), v, \zeta_N|)$ iff there exists a label-preserving simplicial isomorphism between them. The set of local models $\EuM$ is defined by taking one representative of every equivalence class in $\EuScript{Y}$. 

If $u$ is a vertex in $M_T$, then by the definition of a simplicial immersion, $\star(u)$ is label-preserving simplicially isomorphic to a subcomplex of $\Wcal$. Since $\Wcal$ is compact, the number of such subcomplexes is finite. Therefore, $\EuM$ is finite. By definition, if $M \in \PL(\Wcal, \Pi)$, then $M_T$ is label-modeled on $\EuM$. 

On the other hand, suppose $N$ is a triangulated manifold label-modeled on $\EuM$. Define $\Psi: N \to \Wcal$ where a $k$-simplex $\sigma$ in the triangulation $T$ of $N$ is mapped linearly and label-preservingly to the unique $k$-simplex in $\Wcal$ whose vertex labels match those of $\sigma$.

We claim that $\Psi$ is an immersion. It suffices to show that $(\star(x),x)$ maps into a sheet of $\Wcal$.  Since $N$ is modeled on $\EuM$, the neighborhood $(\star(x),x)$ is label-preserving simplicially isomorphic to $(\star(u),u) \subseteq M$ where $(M, \Omega) \in \EuScript{X}$. The result follows from the fact that $\Psi(\star(x), x) = \Omega(\star(u), u)$.
\end{proof}

The next goal is to construct a branched manifold from a set of local models $\EuM$ that will be a universal branched manifold for the LCD family $\PL(\EuM)$. In general, universal branched manifolds are not unique.

\begin{theorem} \label{immersion into W}
Suppose that $\EuM$ is a finite set of local models. Then, there is a compact branched manifold $(\Wcal,\Pi)$ such that $\PL(\Wcal,\Pi)=\PL(\EuM)$.
\end{theorem}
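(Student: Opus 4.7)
The plan is to construct $\Wcal$ directly from $\EuM$, treating the geography labels as a combinatorial recipe for how to glue models together. First, I would invoke \Cref{refinement} to pass from $\EuM$ to a $d$-geography labeling $\EuM_G$ (for $d$ sufficiently large that the $d$-coloring gives distinct labels to every pair of vertices in each model) with $\PL(\EuM) = \PL(\EuM_G)$. The key consequences are that the star of any vertex in a manifold modeled on $\EuM_G$ is determined up to label-preserving simplicial isomorphism by the vertex's geography label, and that within any single model all vertex labels are distinct. Thus for each vertex label $\ell$ appearing in $\EuM_G$, there is a canonical model $M_\ell \in \EuM_G$ centered at a vertex labeled $\ell$, with associated PL homeomorphism $\rho_\ell : |M_\ell| \to [0,1]^n$.

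I would then define $\Wcal$ as the abstract simplicial complex with one $0$-cell for each vertex label of $\EuM_G$ and one $k$-cell for each equivalence class of labeled $k$-simplices in $\EuM_G$, where two labeled simplices are identified iff their labeled vertex sets coincide. For each $\ell$ there is a label-preserving simplicial embedding $\iota_\ell : |M_\ell| \hookrightarrow \Wcal$, and using the geography labeling (every simplex of $\Wcal$ containing the vertex $v_\ell$ arises from a simplex of $M_\ell$ containing its center), the image $\Wcal_\ell := \iota_\ell(|M_\ell|)$ is precisely $\star(v_\ell, \Wcal)$. Set $\pi_\ell := \rho_\ell \circ \iota_\ell^{-1} : \Wcal_\ell \to [0,1]^n$. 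The branched manifold axioms are then essentially automatic: each $\pi_\ell$ is a PL homeomorphism, so $\Wcal_\ell$ is a single sheet; compatibility reduces to $p = q \Leftrightarrow p = q$ since each $\pi_\ell$ is injective; and condition (i) holds because every point of $\Wcal$ lies in the open star of some $v_\ell$, which is contained in the interior of $\Wcal_\ell$.

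The main work is showing $\PL(\Wcal, \Pi) = \PL(\EuM_G)$. The inclusion $\PL(\EuM_G) \subseteq \PL(\Wcal, \Pi)$ is direct: given a labeled triangulated manifold $M$ modeled on $\EuM_G$, sending each vertex to its label defines a simplicial map $\Omega : M \to \Wcal$ whose restriction to every star is a label-preserving simplicial isomorphism onto some $\Wcal_\ell$; local injectivity of $\Omega$ combined with the injectivity of every $\pi_j$ then forces $\pi_j \circ \Omega$ to be locally injective on $\Omega^{-1}(\Wcal_j)$. I expect the reverse inclusion to be the main obstacle. Given an immersion $\Psi : N \to \Wcal$, I would apply \Cref{pullback} to obtain a triangulation $T$ of $N$ making $\Psi$ simplicial, and label each vertex of $N_T$ by the label of its image. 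For any vertex $u \in N_T^{(0)}$ with label $\ell$, the image $\Psi(\star(u, N_T))$ lies in $\star(v_\ell, \Wcal) = M_\ell$. The immersion condition with $j = \ell$ combined with injectivity of $\pi_\ell$ forces $\Psi|_{\star(u, N_T)}$ to be locally injective; a standard simplicial argument (two simplices of $\star(u)$ meeting at $u$ cannot share an image without violating local injectivity at $u$, and the link case reduces to this via joins) promotes this to injectivity on the whole closed star. Since the image is a subcomplex of $M_\ell$ containing an open neighborhood of $v_\ell$ and therefore meets the interior of every $n$-simplex of $M_\ell$, it must equal $M_\ell$, giving surjectivity as well. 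Thus $\Psi|_{\star(u, N_T)}$ is a label-preserving simplicial isomorphism onto $M_\ell$, exhibiting $N_T$ as modeled on $\EuM_G$ and placing $N$ in $\PL(\EuM)$.
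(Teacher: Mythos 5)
Your construction of $\Wcal$ from the labeled simplices of $\EuM_G$ is essentially the paper's $\Wcal=\Theta_G(Q)\subset\Delta_G$, and passing to a $d$-geography labeling via \Cref{refinement} is exactly the first step of the paper's argument. But there is a genuine error in your analysis of the local structure, and it invalidates the rest of the proof.

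You assert that every simplex of $\Wcal$ containing $v_\ell$ arises from a simplex of $M_\ell$, so that $\Wcal_\ell:=\iota_\ell(|M_\ell|)=\star(v_\ell,\Wcal)$ and $\pi_\ell=\rho_\ell\circ\iota_\ell^{-1}$ is a PL \emph{homeomorphism}. This is false, and it is precisely where the branching lives. The geography label $\ell$ of a vertex $u$ determines the \emph{colored} complex $N(M,u,d)$ up to color-isomorphism, hence it determines the colored star $\star(u,M)$. But it does not determine the \emph{geography} labels of the other vertices $w\in\star(u,M)$: those depend on $N(M,w,d)$, which sticks out beyond $N(M,u,d)$, and the part outside is not recorded in $\ell$. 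Two manifolds $M,M'$ modeled on $\EuM_G$ can contain vertices $u,u'$ with the same geography $\ell$ and color-isomorphic stars, yet with adjacent vertices carrying different geography labels. Their stars then map under $\Theta_G$ to \emph{different} subcomplexes of $\Wcal$, both lying in $\star(v_\ell,\Wcal)$. So $\star(v_\ell,\Wcal)$ is a union of several sheets, one for each ``germ'' of data beyond the $d$-ball, and your $\Wcal_\ell$ is only one of them — in particular it fails to be a neighborhood of $v_\ell$, so your condition (i) already breaks. If your picture were right, $\Wcal$ would be an honest compact PL $n$-manifold, and $\PL(\Wcal,\Pi)$ would consist of its finite covers; that cannot recover, for example, the LCD family of all closed orientable $3$-manifolds.

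The paper handles this by projecting away the geography: it sets $\Wcal_x=\star(x,\Wcal)$ and defines $\pi_x=h_x\circ\Psi|_{\Wcal_x}$, where $\Psi:\Delta_G\to\Delta_C$ is the forgetful map to the color simplex and $h_x$ identifies the colored star with $[0,1]^n$. Sheets are then the images $\Theta_G(\star(q,Q))$ of individual stars, and the compatibility condition follows because all $\pi_x$ factor through $\Psi$. Your downstream arguments also depend on injectivity of $\pi_\ell$ (you use it both to get $\pi_j\circ\Omega$ locally injective in the easy inclusion, and to force $\Psi|_{\star(u,N_T)}$ injective in the hard one), so they do not survive the correction. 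In fact, the reverse inclusion $\PL(\Wcal,\Pi)\subseteq\PL(\EuM_G)$ is nontrivial even with the correct local projections: the paper must track how the geographies of a chain of adjacent vertices nest, via $\phi_G(v)\supset N(\phi_G(v_r),q_r,d-r)$, to recover the full $d$-neighborhood — a step absent from your ``essentially automatic'' simplicial argument.
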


\begin{proof}
Let $\EuM$ be a finite set of local models. Let $d\ge 2$ be larger than the maximum diameter of a local model in $\EuM$. Use \Cref{refinement} to construct a $d$-geography labeling $\EuM_G$ from $\EuM$. Let $M$ be any labeled manifold that is modeled on $\EuM_G$ and let $\phi=(\phi_C, \phi_G)$ be the $d$-geography labeling of $M$. If $v$ is a vertex in $M$  then $\phi_C(v)$ is the color of $v$ and $\phi_G(v)$ is a colored complex such that $$\phi_G(v)\cong_c (\ N(M,v,d)\ ,\ \phi_C|\ )$$ where $\cong_c$ means simplicially isomorphic preserving color labels. 

The proof of this theorem involves many simplicial complexes with color and geography labels. To keep track of them all, we introduce the following two simplices. Let $C$ be the finite set of colors and $G$ the finite set of geographies. Define the {\em color simplex} $\Delta_C$ to be the simplex with vertex set $C$ and the {\em geography simplex} $\Delta_G$ to be the simplex with vertex set $G$. These two simplicial complexes have natural color and geography labels, respectively. In each simplex, there is a unique vertex with a given label. We refer to the labels on $\Delta_G$ as {\em pseudo-geography labels} because they determine a neighborhood of a vertex in some manifold modeled on $\EuM_G$, {\bf not} a neighborhood of a vertex in $\Delta_G$.

Let $(Q,\phi_C,\phi_G)$ be the disjoint union of all the labeled triangulated manifolds in $\TR(\EuM_G)$. 
 There are two canonical simplicial maps defined on the vertices by
  $$(\Theta_C,\Theta_G):Q\rightarrow\Delta_C\times \Delta_G\qquad
  (\Theta_C,\Theta_G)(v)=(\phi_C(v),\phi_G(v)).$$
  The maps $\Theta_C, \Theta_G$ are color and geography preserving, respectively. If a subcomplex $X$ of $Q$ has vertices with distinct color labels, then $\Theta_C|: X \to \Delta_C$ is an embedding. The same is true for $\Theta_G|_X$ if $X$ has distinct geography labels.

There is also a canonical projection $\Psi:\Delta_G\rightarrow\Delta_C$ that sends a geography vertex $(N(M, v, d), \phi_C|)$ in $\Delta_G$ to the vertex in $\Delta_C$ that has the same color as $v$. 
  Define the subcomplex $\Wcal\subset\Delta_G$  by $$\Wcal=\Theta_G(Q).$$ 
  That is to say, $\Wcal$ is obtained by identifying simplices in manifolds modeled on $\EuM_G$ when they have the same geography labels.

For each $x\in \Wcal^{(0)}$ define a subcomplex $\Wcal_x=\star(x,\Wcal)$. 
By definition of $\Wcal$, $$\star(x,\Wcal)=\bigcup_{q\in\Theta_G^{-1}(x)} \Theta_G(\star(q,Q)).$$
If $\Theta_G(q)=x=\Theta_G(q')$ then,
 by definition of the geography label, $$\Theta_C(\star(q',Q))=\Theta_C(\star(q,Q)),$$ so 
$\Psi(\Theta_G(\star(q,Q)))$ is independent of the choice of $q$ in $\Theta_G^{-1}(x)$.
Therefore, $$\Psi(\star(x,\Wcal))=\Theta_C(\star(q,Q)).$$
Since $\star(q,Q)$ is colored distinctly, $\star(q,Q)$ is color-isomorphic to $\Theta_C(\star(q,Q))$. Therefore, there is a PL homeomorphism $h_x:\Psi(\star(x,\Wcal))\rightarrow [0,1]^n$. Define the chart $$\pi_x:\Wcal_x\rightarrow[0,1]^n\qquad{\rm by}\quad  \pi_x=h_x\circ\Psi|_{\Wcal_x}.$$
Let $\Pi = \{\pi_x: \Wcal_x \to [0,1]^n \, | \, \, x \in \Wcal^{(0)}\}$. Since $\Wcal$ is compact, the set $\Pi$ is finite.

\gap
\noindent \textbf{Claim: }$(\Wcal,\Pi)$ is a branched manifold.
\gap
 
\noindent \textbf{Proof of Claim:} By definition of $\pi_x$,
we have that $\Theta_G(\star(q,Q))$ is a sheet in $\Wcal_x$ for all $q \in \Theta_G^{-1}(x)$, so that $\Wcal_x$ is the union of sheets. This is the {\em sheet condition}.

Take $p, p' \in \Wcal_x \cap \Wcal_y$. The charts $\pi_x$ and $\pi_y$ are both the composition of $\Psi$ with a PL homeomorphism. Therefore, $\pi_x(p)= \pi_x(p')$ if and only if $\Psi(p) = \Psi(p')$, which is if and only if $\pi_y(p) = \pi_y(p')$. Thus, the {\em compatibility condition} holds.
This completes the proof of the claim. \hfill$\blacksquare$ 

\gap

To prove the theorem, it remains to show that $\PL(\Wcal,\Pi)=\PL(\EuM)$. By \Cref{refinement} we have $\PL(\EuM)=\PL(\EuM_G)$, so it suffices to prove that $$\PL(\Wcal, \Pi) = \PL(\EuM_G).$$

Since every manifold modeled on $\EuM_G$ is $d$-colored, every model of $\EuM_G$ contained in a $d$-neighborhood of such a manifold maps injectively by $\Theta_G$ into $\Wcal$. Together with the compatibility condition, this implies that $\Theta_G|_M$ is an immersion for any manifold $M$ modeled on $\EuM_G$. Therefore,  $\PL(\Wcal,\Pi)\supseteq\PL(\EuM_G)$.

The reverse containment is more involved. Begin by defining $\EuM^*$ to be a set of pseudo-geography labeled sheets in $\Wcal$. Observe that no two sheets are label-isomorphic. We will show that $$\PL(\Wcal, \Pi) \subseteq \PL(\EuM^*) \subseteq \PL(\EuM_G).$$ 

If a PL manifold $M$ admits an immersion $\Omega$ into $\Wcal$, then \Cref{pullback} gives a triangulation of $M$ such that $\Omega: M \to \Wcal$ is simplicial. This implies that the star of every vertex $v\in M$ is label-isomorphic to a sheet in $\Wcal$. Thus, $M$ is modeled on the set $\EuM^*$ so that $\PL(\Wcal, \Pi) \subseteq \PL(\EuM^*)$.

 To prove the containment $\PL(\EuM^*)\subseteq\PL(\EuM_G)$, suppose that $M\in\TR(\EuM^*)$ contains a vertex $v$. The manifold $M$ has color and geography labels, which we also strategically call $\phi = (\phi_C, \phi_G)$. Let $\Theta_G(v) = x \in \Wcal$. Fixing a point $q$ in $Q$ with $\Theta_G(q) = x$, we have $\phi_G(v) = \phi_G(q)$, and the geography $\phi_G(q) = (N(Q, q, d), \phi_C|)$ contains a model in $\EuM_G$ that is centered on $q$. The theorem then follows from:
 
\gap
\noindent {\bf Claim:}  $N(M,v,d)\cong_c \phi_G(v).$
\gap

\noindent\textbf{Proof of Claim:} In the argument below, all complexes have diameter at most $d$. Because all relevant manifolds are $d$-colored, the complexes are embedded in $\Delta_C$ by $\Theta_C$. By identifying a complex $X$ with its image $\Theta_C(X)\subset\Delta_C$, then color-preserving maps between complexes become inclusion maps in $\Delta_C$. The converse is also true; if $\Theta_C(X) \supset \Theta_C(Y)$ then $X$ contains a subcomplex that is color-isomorphic to $Y$. We will often omit $\Theta_C$ to avoid cumbersome notation.

Since $M$ is modeled on $\EuM^*$,  $\star(v,M)$ is label-isomorphic to a sheet in $\Wcal_x$. By the definition of $\pi_x$, the sheet is color-isomorphic to $\star(q,Q)$. Suppressing $\Theta_C$, we therefore have 
$$\star(v)=N(M,v,1)\subset\phi_G(v).$$

We refer to $\star(q)$ as the {\em center} of the geography $\phi_G(v)$. Thus, at each vertex $v$ in $M$, the center of the geography label of $v$ can be identified (via $\Theta_C$) with the star neighborhood of $v$ in $M$. 

Suppose that $e\subset M$ is an edge with endpoints $v$ and $u$. There is a model
neighborhood of $v$ in $M$ given by some element of $\EuM^*$, and $e$ is in this neighborhood.
 There is an edge $\tilde e\subset Q$ such that $\Theta_G(\tilde e)=\Theta_G(e)$ and the endpoints $q$ and $z$ of $\tilde e$ are identified with $v$ and $u$, respectively. This means that $\phi_G(u) = \phi_G(z)$. By definition of $\EuM^*$, it follows that
 $$\phi_G(v)=N(Q,q,d)\qquad \textnormal{ and } \qquad \phi_G(u)=N(Q,z,d).$$
Since $d(q,z)=1$, in $\Delta_C$
$$\phi_G(v)\supset N(\phi_G(u),z,d-1).$$
This containment says that the geographies at adjacent vertices of $M$ have large overlap. 
Suppose  $v=v_0,v_1,\cdots,v_{r}$ is  a sequence of vertices in $M$ such that $v_i$ is adjacent to $v_{i+1}$. Then, $$\phi_G(v)\supset N(\phi_G(v_r),q_r,d-r),$$
where $q_r \in Q$ is chosen as follows. Take the edge $e_r$ in $M$ with endpoints $v_{r-1}$ and $v_r$. There is an edge $\tilde e_r \subset Q$ such that $\Theta_G(\tilde e_r) = \Theta_G(e_r)$ and the endpoints of $\tilde e_r$ are identified with $v_{r-1}$ and $v_r$. Let $q_r$ be the end point that is identified with $v_r$ via $\Theta_G$. 

If $d-r\ge 1$, then the center, $\star(v_r)$, of each of the geographies $\phi_G(v_r)$ is contained in $\phi_G(v)$. For every vertex $p$ in $N(M,v,d)$, there exists a sequence $v, v_1, \cdots, v_r$ such that $p$ is in $\star(v_r)$ and $r \leq d-1$. Since $N(M, v, d)$ is the union of these star neighborhoods, in $\Delta_C$,
$$\phi_G(v)\supset N(M,v,d).$$
Given that $\star(v) \cong_c \star(q)$ and $\phi_G(v) = N(Q,q,d)$, the two sets are equal in $\Delta_C$.  \hfill $\blacksquare$

\end{proof}

\noindent \Cref{brnch implies LCD} and \Cref{immersion into W} together imply \Cref{main thm}.

\section{Examples}\label{example}
As a first example, we define a compact branched 3-manifold $(\Wcal,\Pi)$ such that $\PL(\Wcal,\Pi)$ consists of all torus bundles over the circle. The monodromy of every such bundle is an element of $\GL(2,\ZZ)$. Every element in $\GL(2,\ZZ)$ can be written as a finite product of the following matrices: 
 $$a_1=\bpmat1 & 1\\0 & 1\epmat\qquad a_2=\bpmat1& 0\\ 0& -1\epmat \qquad a_3=\bpmat0 & 1\\1 & 0\epmat$$
For $1\le i\le 3$, let $p_i:A_i\rightarrow S^1$ be a torus bundle over the circle with 
 monodromy $a_i$. Choose a point  $x\in S^1$  and set
 $B_i=p_i^{-1}(x)\subset A_i$.
 Identifying each $B_i$ with a fixed $T^2$, we have that $A_i\cap A_j=B_i=B_j$ for all $i,j$. Therefore, 
 $\Wcal=\bigcup_i \, A_i$ is connected. There is an obvious branched manifold structure $\Pi$ on $\Wcal$ such that the inclusion maps $A_i\hookrightarrow\Wcal$ are immersions.
 
Let $\Gamma$ be the branched 1-manifold (train track) that is the wedge of three copies of $S^1$ identified along $x$.
 Gluing the maps $p_1,p_2,p_3$ gives a map $q:\Wcal\rightarrow \Gamma$ such that $q|_{A_i}=p_i$ and $q(A_i)$ is the $i^{th}$ copy of $S^1$ in $\Gamma$. Thus, $\Wcal$ is a torus bundle over $\Gamma$ in a way that extends the three original bundles.
  
 An immersion $f:S^1\rightarrow\Gamma$ traverses a sequence of edges in $\Gamma$. This sequence determines a word (up to circular conjugacy) in the alphabet $\{a_1,a_2, a_3\}$ according to the sequence of edges. This word gives a matrix $c\in\GL(2,\ZZ)$, and every element of $\GL(2,\ZZ)$ is obtained in this way.
 Let $C$ be the torus bundle over $S^1$ with monodromy $c$.
 There is an immersion $F:C\rightarrow \Wcal$ that covers $f$. Thus, every torus bundle over $S^1$ immerses into $\Wcal$.
 
 If a closed $3$-manifold $M$ immerses into $\Wcal$, then the preimage of a torus fiber in $\Wcal$ is a union of tori in $M$. Hence, $M$ is a torus bundle over $S^1$.
 
 It follows from \Cref{main thm} that there is a finite set of local models $\EuM$ such that $\PL(\EuM)$
 consists of all 3-manifolds that are torus bundles over the circle. It is not too hard to construct such a set directly using LCD $\Leftrightarrow$ LLCD (\Cref{labels are LCD}).

 As another example, consider the set of all closed, orientable 3-manifolds. By \cite{MR935525}, there is a set of models $\EuM$ such that every closed, orientable 3-manifold is modeled on $\EuM$. However, one also obtains non-orientable manifolds with these models. To correct this, one can add an orientation label to each 3-simplex, and then use that LLCD implies LCD. Thus, there is a universal branched 3-manifold that all closed orientable 3-manifolds immerse into, and non-orientable ones do not.
   
A different proof of these results is obtained from \Cref{main thm} by constructing a universal branched manifold for closed, orientable 3-manifolds using the fact that every such 3-manifold has a Heegaard splitting. 
 
The fact that there is a universal branched manifold for closed hyperbolic 3-manifolds is more surprising and more difficult to prove.

\bibliography{legorefs.bib} 
\bibliographystyle{abbrv} 

\end{document}